\numberwithin{equation}{section}
\def\e{\text{e}}
\def\E{\mathbb{E}}
\def\R{\mathbb{R}}
\newtheorem{theorem}{Theorem}[section]
\newtheorem{lemma}{Lemma}[section]
\newtheorem{assumption}{Assumption}[section]
\newtheorem{proposition}{Proposition}[section]
\newtheorem{remark}{Remark}[section]
\newtheorem{corollary}{Corollary}[section]
\begin{document}
\title{Propagation of chaos and approximation error of random batch particle system in the mean field regime}

\author[a,b]{Lei Li\thanks{leili2010@sjtu.edu.cn}}
\author[a]{Yuelin Wang\thanks{Corresponding author: sjtu$\_$wyl@sjtu.edu.cn}}
\author[a,b]{Shi Jin\thanks{shijin-m@sjtu.edu.cn}}
\affil[a]{School of Mathematical Sciences, Shanghai Jiao Tong University, Shanghai 200240, China }
\affil[b]{Institute of Natural Sciences and MOE-LSC, Shanghai Jiao Tong University, Shanghai 200240, China }

\date{}
\maketitle
\begin{abstract}
    The random batch method [J. Comput. Phys. 400 (2020) 108877] is not only an efficient algorithm for simulation of classical $N$-particle systems and their mean-field limit, but also a new model for interacting particle system that could be more physical in some applications.
In this work, we establish the propagation of chaos for the random batch particle system and at the same time obtain its sharp approximation error to the classical mean field limit of $N$-particle systems. The proof leverages the BBGKY hierarchy and achieves a sharp bound both in the particle number $N$ and the time step $\tau$. In particular, by introducing a coupling of the division of the random batches to resolve the $N$-dependence, we derive an $\mathcal{O}(k^2/N^2 + k\tau^2)$ bound on the $k$-particle relative entropy between the law of the system and the tensorized law of the mean-field limit. 
This result provides a useful understanding of the convergence properties of the random batch system in the mean field regime.

\textbf{Keywords:} Interacting particle systems, Random Batch Method, mean-field limit, BBGKY, propagation of chaos.
\end{abstract}

\section{Introduction}
Interacting particle systems are prevalent in numerous significant problems across physical, social, and biological sciences. Examples include molecular dynamics \cite{liang2022superscalability,jin2021randomEwald}, swarming \cite{toner1998flocks}, chemotaxis \cite{horstmann20031970,bertozzi2012characterization}, flocking \cite{ha2009simple,Ha2008particle,cucker2007emergent,motsch2011new}, synchronization \cite{kuramoto1975self,buck1966biology}, consensus \cite{motsch2014heterophilious}, and random vortex models \cite{majda2002vorticity}. 
In this work, we are interested in the following general first-order system of $N$ particles with constant diffusion $\sigma$:
\begin{equation}\label{eq:particle}
    dX_i = b_0(X_i)dt + \frac{1}{N-1}\sum\limits_{j=1,\,j\ne i}^N b(X_i - X_j)dt + \sqrt{2\sigma} dW_i,\quad i=1,\cdots, N,
\end{equation}
with $X_i(0) = X_i^0$ drawn independently from the same initial distribution $\mu_0$.  
Here, $X_i = X_i(t)$, $W_i=W_i(t)$, where we omit the explicit dependence on $t$ for notational simplicity. Also, $\{X_i\in\R^d\}$ are the labels for particles; $b_0:\R^d\to\R^d$ denotes the drift force and $b:\R^d\to\R^d$ denotes the interaction kernel, and $\{W_i\}_{i=1}^N$ are $N$ independent $d$-dimensional Brownian motions.

It is well-known that simulating the $N$-particle system can be highly challenging, as $N$ is often very large in practical applications. To address this issue, theoretically, one classic approach is to consider the mean-field limit, which provides a simplified yet effective description of the system. Unlike the $N$-particle dynamics described in \eqref{eq:particle}, the mean-field limit focuses on a one-particle effective dynamics when $N\to \infty$. For \eqref{eq:particle}, the limiting behavior of any particle is described by the McKean SDE
\begin{equation}\label{eq:mckean0}
    d\bar{X}_1 = b_0(\bar{X}_1)dt + b*\bar{\mu}_t (\bar{X}_1)dt + \sqrt{2\sigma} dW_1,
\end{equation}
where $\bar{\mu}_t$ denotes the law of $\bar{X}_1$ and $\bar{X}_1(0)$ is sampled independently from $\mu_0$. 

It is well-established that the large $N$ limit of particle systems can be mathematically formalized through the concept of propagation of chaos, first introduced in \cite{kac1956foundations}. 
Recently, entropy methods have gained a lot of attraction for quantitative propagation of chaos.
Jabin and Wang employed combinatorial analysis to derive a series of estimates for relative entropy, as seen in \cite{jabin2016mean,jabin2017mean,jabin2018quantitative} for instance. A significant improvement in 2023 came from Lacker \cite{lacker2023hierarchies}, where the BBGKY hierarchies is utilized to establish optimal local convergence rates of $N$ for relative entropy. Following this, several works extended this technique to various cases, such as \cite{hao2024strong,grass2024sharp,cattiaux2024entropy}. Most recently, Bresch et al. \cite{bresch2024duality} developed a novel duality-based method for $L^2$ kernels, further advancing the field. Some reviews see \cite{golse2016dynamics,jabin2014review,chaintron2022propagation,chaintron2022propagation2}.

From the perspective of numerical simulation, the bottleneck is the $O(N^2)$ computational cost.  An efficient algorithm that reduces the computational cost to  $\mathcal{O}(N)$ is the Random Batch Method (RBM), introduced by Jin et al. \cite{jin2020random} in 2020. The RBM constructs a randomly decoupled system where interactions occur only within small subsystems of $p$ particles, with $p\ll N$. At each time step, particles are randomly divided into batches of size $p$, and interactions are computed only within these batches. Without loss of generality, we set $p$ divisible by  $N$ in this paper. We present the RBM in Algorithm \ref{algo: the RBM}, where the notation $\xi_n(i)$ is defined in Section \ref{sec:mainresult}, specifically in equation \eqref{ntt:xini}.
\begin{algorithm}\label{algo: the RBM}
\caption{ The RBM for \eqref{eq:particle}}
\For{$n=0$ to $T/\tau-1$}
    {Divide $\{1,\cdots, N\}$ into $\lceil N/p \rceil$ batches randomly\;
        Update $\tilde{X}_i^\xi$, $i=1,\cdots,N$, by solving 
        \begin{equation}\label{eq:rbmalgo}
            d\tilde{X}_i^\xi = b_0(\tilde{X}_i^\xi)dt + \frac{1}{p-1}\sum\limits_{j\in\xi_n(i),\,j\ne i}^p b(\tilde{X}_i^\xi - \tilde{X}_j^\xi)dt + \sqrt{2\sigma} dW_i,
        \end{equation}
        for $t\in [t_n,t_{n+1}).$ 
        }
\end{algorithm}

Due to the randomness in batch selection, the time-averaged effect of these interactions provides a good approximation of the original system \cite{jin2020random,ha2021uniform,jin2022random}. Thanks to its simplicity, the RBM has found applications in a variety of fields, including solving the Poisson-Nernst-Planck, Poisson-Boltzmann, and Fokker-Planck-Landau equations \cite{li2022some,carrillo2021random}, efficient sampling \cite{li2020random}, molecular dynamics simulations \cite{jin2021randomEwald,gao2024rbmd}, and quantum Monte Carlo methods \cite{LiXiantao2020random}. See also the review article \cite{jin2021randomreview}. In addition, the RBM system can be regarded as a {\it new} model with its own distinct physics. The random interaction structure within the subsystems indicates a different dynamics compared to the original system. In 2024, Li et al. demonstrated that in Landau-type models, the random batch interaction has physical significance and leads to the discretization of the diffusion term in Landau-type equations \cite{du2024collision}.

There have been several theoretical advancements in the analysis of the RBM. In the original work \cite{jin2020random}, the bounds of the strong and weak errors are presented under suitable assumptions. 
The geometric ergodicity and long-time behavior of the interacting particle systems with the RBM are investigated in \cite{jin2023ergodicity} and \cite{ye2024error}.  These works use the Wasserstein distance between the invariant distributions of the original interacting particle system and its RBM-discretized counterpart.

Different from the propagation of chaos of common particle system, there are two parameters $(N,\tau)$ in this problem. To clarify it, we show the limit relations in \eqref{graph}.
\begin{equation}\label{graph}
    \begin{tikzcd}
    \text{Original system} \,\eqref{eq:particle}  \arrow[r, "(d)\,N\to\infty"]  & \text{mean-field limit} \,\eqref{eq:mckean0}  \\
    \text{Random Batch model} \,\eqref{eq:rbmalgo} \arrow[ru,"(e)"]  \arrow[r, "(b)\,N\to\infty"']  \arrow[u, "(a)\,\tau \to 0"]& \text{mean-field limit of RBM} \arrow[u, "(c)\,\tau\to 0"']
    \end{tikzcd}
\end{equation}

We have already discussed paths (a) and (d) in the preceding paragraphs, supported by relevant references. In 2022, Jin and Li investigated the mean-field limit of the Random Batch Method (RBM) for first-order systems with Gaussian noise \cite{jin2022mean}. Under appropriate assumptions, they analyzed \eqref{eq:rbm1d} and established an $\mathcal{O}(1/N)$ estimate for path (b) and an $\mathcal{O}(\tau)$ estimate for path (c), both measured in the Wasserstein-1 distance. These results provide a rigorous foundation for understanding the convergence properties of the RBM in the context of mean-field limits. In 2024, utilizing the logarithmic Sobolev inequality, Huang et al.\cite{huang2025mean} derived a uniform-in-time $\mathcal{O}(1/N + \tau^2)$ bound on the scaled realtive entropy between the $N$-particle joint law and the tensorized law of the mean-field limit for path (e).

Specifically, in this work, we derive the bound 
\begin{equation*}
    H(\mu_t^k \mid \bar{\mu}_t^{\otimes k}) \le Ct \e^{Ct}(k^2/N^2 + k \tau^2),
\end{equation*}
for the relative entropy $H$ of the $k$-particle marginal $\mu_t^k$,    which will be defined   in Section \ref{sec:mainresult}.  When $k=1$, it leads to an $\mathcal{O}(1/N^2 + \tau^2)$ of the scaled relative entropy. Our result for $N$ is sharp and thus extends the findings of \cite{huang2025mean}. Although our result is not uniform in time, our proof does not rely on the logarithmic Sobolev inequality. 

The primary challenge arises from the coupling of the time-discretized structure in the RBM. At each time step, the particle distribution is influenced by an independent random division, which prevents the direct application of the original BBGKY hierarchies from \cite{lacker2023hierarchies} in the path space. Inspired by \cite{grass2024sharp} and \cite{du2024collision}, we use the Liouville equation to derive the hierarchies for the time-marginal distributions $\mu_t^k$. Here, a key technique is the introduction of an auxiliary random batch division, which effectively decouples particle interactions. This construction implicitly relies on the proximity between the RBM and original systems, ensuring that their dynamical behaviors remain close. In addition, we employ an alternative improvement process over \cite{grass2024sharp}, which relies on the Law of Large Numbers, to bridge the gap induced by the discrete numerical scheme. By combining these tools, we circumvent the structural complexities of the RBM and achieve our main result.

The rest of the paper is organized as follows.
In Section \ref{sec:mainresult}, we present our main theorem along with the necessary settings and notations. Section \ref{sec:xini} discusses the $k$-marginal distribution dynamics driven by the random batch system. A key coupling of the random batch division is introduced to describe the difference between the random batch system and the original  $N$-particle system. Based on the error estimates in Section \ref{sec:xini}, we give the proof of Theorem \ref{thm:1} in Section \ref{sec:proof}, while the proofs of auxiliary lemmas are provided in Section \ref{sec:auxlem}.

\section{Setup and main result}\label{sec:mainresult}


Before proceeding, we first clarify our settings and notations. 

Recall the dynamics of the RBM. For each time step $t_n$, we denote $\xi_n$ as a random division of $\{1,\cdots,N\}$ and let $\xi := (\xi_1,\cdots)$ represent the sequence of batch divisions. We take $\Omega$ as the sample space equipped with the uniform probability measure $\mathbb{P}$, and define the filtration $\{\mathcal{F}_n\}_{n\ge 0}$, where $\mathcal{F}_n$ is the $\sigma$-algebra generated by $\{\xi_j,\,j\le n\}$ and the Brownian motions before time $t_n$ in the particle system. In what follows, we will use the symbol $\E$ to indicate expectation over this probability space. For notational convenience, we slightly abuse $\xi_n(\cdot)$ to denote a set-valued function such that
\begin{equation}\label{ntt:xini}
    \xi_n(i):= \{i,i_1,\cdots,i_{p-1}\},
\end{equation}
where $\{i,i_1,\cdots,i_{p-1}\}$ belongs to a single batch in $\xi_n$. Then, the RBM dynamics for $t\in[t_n,t_{n+1})$ with $\xi_n$ is rewritten as
\begin{equation}\label{eq:rbm1d}
    d\tilde{X}_i^\xi = b_0(\tilde{X}_i^\xi)dt + \frac{1}{p-1}\sum\limits_{j\in\xi_k(i),\,j\ne i}^p b(\tilde{X}_i^\xi - \tilde{X}_j^\xi)dt + \sqrt{2\sigma} dW_i.
\end{equation}
For a fixed batch division sequence $\xi$, we denote the joint law of the system at time $t$ 
\begin{equation*}
    \mu_t^{N,\xi}:= \text{Law}(\tilde{X}^\xi (t)),
\end{equation*}
where $\tilde{X}^\xi = (\tilde{X}^\xi_1,\cdots,\tilde{X}^\xi_N).$
For the dynamics in \eqref{eq:rbm1d}, the corresponding Liouville equation governing the evolution of $\mu_t^{N,\xi}$ is given by
\begin{equation}\label{eq:rbmLiouville}
    \partial_t \mu_t^{N,\xi} = - \sum\limits_{i=1}^N \nabla_{x_i} \cdot \Big[\big(b_0(x_i) + \frac{1}{p-1}\sum_{\substack{\ell \in \xi_n(i)\\ \ell\ne i}}b(x_i - x_\ell)\big)\mu_t^{N,\xi}\Big] + \sigma\sum\limits_{i=1}^N \Delta_{x_i}\mu_t^{N,\xi}.
\end{equation}
Here, $\nabla_{x_i}\cdot$ and $\Delta_{x_i}$ denote the divergence and Laplacian operators with respect to the position $x_i$ of the $i$-th particle, respectively. 

Taking into consideration of all the possibility of the realization of random batches, the (time-marginal) law of the system described by \eqref{eq:rbm1d} is given by
\begin{equation*}
    \mu_t^N = \E_\xi \mu_t^{N,\xi},
\end{equation*}
where the expectation $\E_\xi$ is taken over all possible random batch divisions $\xi$. 

Recall the McKean SDEs,
\begin{equation}\label{eq:mckean}
    d\bar{X}_i = b_0(\bar{X}_i)dt + b*\bar{\mu}_t (\bar{X}_i)dt + \sqrt{2\sigma} dW_i, \quad i=1,\cdots,N,
\end{equation}
where $\bar{\mu}_t$ denotes the law of $\bar{X}_i$ for any $i$. Then, $\bar{\mu}_t$ satisfies the following Fokker-Planck equation: 
\begin{equation}\label{eq:FP}
    \partial\bar{\mu}_t = -\nabla \cdot [(b_0(x) + b*\bar{\mu}_t(x))\bar{\mu}_t] + \sigma\Delta\bar{\mu}_t.
\end{equation}

To quantify the discrepancy between two distributions, we utilize the relative entropy defined by
\begin{equation*} 
H\left(P \mid Q \right) :=
\left\{\begin{aligned}
    \int_E \log&\frac{dP}{dQ} dP, \quad& P\ll Q,\\
    &\infty, \quad& \text{otherwise,}
\end{aligned}\right.
\end{equation*}
where $P$ and $Q$ are two probability measures over some appropriate space $E$.

For the $k$-particle marginal distribution, we define
\begin{equation*}
    \mu_t^k := \int_{\R^{(N-k)d}} \mu_t^N dx_{k+1}\cdots dx_N,
\end{equation*}
and denote the relative entropy of $\mu_t^k$ with respect to the tensorized mean-field distribution 
\begin{equation}
    H_t^k:=H(\mu_t^k\,|\, \bar{\mu}_t^{\otimes k}).
\end{equation}
In this work, we assume the integral domain to be the entire space by default and omit the corresponding subscript for simplicity.

We first present our assumptions of the coefficients and the initial distribution.
\begin{assumption}\label{ass:b0b1}
The drift terms $b_0$ and $b_1$ satisfy
\begin{enumerate}
        \item The external drift $b_0$ is one-sided Lipschitz in the sense that there exists $L_0 \in \R$ such that
    \begin{gather}\label{eq:onesidedlip}
    (x-y)\cdot (b_0(x)-b_0(y))\le L_0 |x-y|^2.
    \end{gather}
    \item The interaction kernel $b$ is a Lipschitz smooth function with bounded second order derivatives.
\end{enumerate}
\end{assumption}

\begin{remark}
Note that the one-sided Lipschitz condition \eqref{eq:onesidedlip} on $b_0$ guarantees the weak  well-posedness of the SDEs \eqref{eq:mckean} and \eqref{eq:rbm1d} and allows some confining fields that may have superlinear growth such as $b_0(x)=-|x|^qx$ with $q\ge 1$. In fact, one may allow more relaxed conditions to require that there exists a unique in law weak solution of the SDE
 \begin{equation*}
        dY(t) = b_0(Y(t))dt + \sqrt{2\sigma}dW(t), \quad Y(t) =y_t,
\end{equation*}
 where $W$ is a $d$-dimensional Brownian motion. This assumption, however, does not provide explicit verifiable conditions.
\end{remark}

\begin{assumption}\label{ass:ini}
The initial distribution $\mu_0$ satisfies
    \begin{enumerate} 
        \item The initial Fisher information is finite, i.e., $\int_{\mathbb{R}^d} |\nabla \log (\mu_0)|^2\mu_0 <\infty$.
    \item The initial distribution $\mu_0$ is sub-Gaussian; that is, for a random variable $Y(0)\sim \mu_0$, there exists a constant $C$ such that for any $t>0$,
    \begin{equation*}
        \mathbb{P} (|Y(0)|\ge t) \le 2 \exp (-t^2/C^2).
    \end{equation*}
    \end{enumerate}
\end{assumption}


Then, we turn to our main theorem.

\begin{theorem}\label{thm:1}
 Suppose that $b_0$ and $b_1$ satisfy Assumption \ref{ass:b0b1}, the initial law $\mu_0$ satisfies Assumption \ref{ass:ini}, and $\sigma>0$. Consider the joint law $\mu_t^N$ for the $N$-particle system \eqref{eq:rbm1d} and the law $\bar{\mu}_t$ for system \eqref{eq:mckean}. Then, for any $t>0$, there exists a constant $C$ independent of $N$ and $k$ such that
\begin{equation}\label{eq:thm1}
    H(\mu_t^k\,|\, \bar{\mu}_t^{\otimes k}) \le Ct\e^{Ct}\left(\frac{k^2}{N^2} + k\tau^2\right).
\end{equation}
\end{theorem}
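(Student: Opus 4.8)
The plan is to run an entropy-based BBGKY hierarchy for the time-marginal laws $\mu_t^k$, in the spirit of Lacker \cite{lacker2023hierarchies}, but carried out at the level of the $\xi$-averaged marginals so that the $N$- and $\tau$-dependence are captured at once. Because the diffusion is linear, the averaged marginal $\mu_t^k$ obtained by integrating the Liouville equation \eqref{eq:rbmLiouville} over $x_{k+1},\dots,x_N$ and taking $\E_\xi$ again solves a genuine Fokker--Planck equation, now with an effective, batch-averaged drift $b^{\mathrm{eff}}_i$ that carries both the average over divisions and the conditional law of the integrated-out particles. Writing $g_t^k:=\mu_t^k/\bar\mu_t^{\otimes k}$ and $I_t^k:=\int|\nabla\log g_t^k|^2\,d\mu_t^k$ for the relative Fisher information and comparing against the tensorized Fokker--Planck flow \eqref{eq:FP}, the standard entropy computation yields
\[
\frac{d}{dt}H_t^k = -\sigma\, I_t^k + \sum_{i=1}^k\int \mathcal{E}_i\cdot\nabla_{x_i}\log g_t^k\,d\mu_t^{k},
\]
where $\mathcal{E}_i:=b^{\mathrm{eff}}_i-b*\bar\mu_t(x_i)$ is the discrepancy between the effective interaction on particle $i$ and the mean-field drift (the common external drift $b_0$ cancels). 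The negative Fisher-information term is the resource that must absorb the error.

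The core of the argument is the decomposition of $\mathcal{E}_i$ into a \emph{mean-field fluctuation} $\frac{1}{N-1}\sum_{j\ne i}b(x_i-x_j)-b*\bar\mu_t(x_i)$ and a \emph{batch fluctuation} $b^{\mathrm{eff}}_i-\frac{1}{N-1}\sum_{j\ne i}b(x_i-x_j)$. For the mean-field part I would follow Lacker: after isolating the indices $j\le k$, which are $O(k/N)$ in number and feed the $k^2/N^2$ remainder, the contribution of the indices $j>k$ is handled through the Donsker--Varadhan variational formula, bounding it by a small multiple of $H_t^{k+1}$ plus a log-moment-generating term that is $\mathcal{O}(k^2/N^2)$ by the sub-Gaussian moment propagation guaranteed by Assumption \ref{ass:ini} together with the confining one-sided Lipschitz drift of Assumption \ref{ass:b0b1}. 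For the batch part the decisive fact is the consistency identity
\[
\E_{\xi_n}\Big[\tfrac{1}{p-1}\sum_{\ell\in\xi_n(i),\,\ell\ne i}b(x_i-x_\ell)\Big]=\tfrac{1}{N-1}\sum_{j\ne i}b(x_i-x_j),
\]
which holds because a fixed $j\ne i$ lands in the batch of $i$ with probability $(p-1)/(N-1)$. Introducing the coupling of the random divisions lets me pair the batch configuration with the full $N$-particle configuration and keep the $N$-scaling sharp, rather than surrendering a power of $N$ to a crude variance bound.

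The batch fluctuation cannot be absorbed by Young's inequality, since its second moment over a single division is only $\mathcal{O}(1/p)$, far larger than $\tau^2$; instead I would exploit its mean-zero-in-time structure. Since $\xi_n$ acts only on $[t_n,t_{n+1})$, the marginal is independent of $\xi_n$ at the left endpoint $t_n$, so by the consistency identity the effective batch drift coincides there with the full $N$-particle drift and the batch fluctuation contributes nothing to $\frac{d}{dt}H_t^k$ at $t=t_n$; the dependence on $\xi_n$ then grows only as the dynamics run. An improvement process in the manner of \cite{grass2024sharp}, combined with a law-of-large-numbers argument over the independent divisions $\xi_0,\xi_1,\dots$ and a second-order Taylor expansion of the force increment over the frozen subinterval---legitimate because $b$ has bounded second derivatives---then leaves a residual of size $\mathcal{O}(\tau^2)$ per particle, hence $\mathcal{O}(k\tau^2)$ after summing over $i\le k$.

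Collecting the estimates yields a BBGKY-type differential inequality coupling $H_t^k$ and $I_t^k$ to $H_t^{k+1}$ and $I_t^{k+1}$ with a source $\mathcal{O}(k^2/N^2+k\tau^2)$, the Fisher-information terms being arranged so that the cross terms are dominated. Using a priori propagation of moments and Fisher information for the batch flow under Assumptions \ref{ass:b0b1}--\ref{ass:ini}, I would close the chain exactly as in Lacker's resolution of the entropy hierarchy, so that the coupling to the $(k+1)$-marginal does not degrade the rate, and a final Gronwall step produces $H_t^k\le Ct\,\e^{Ct}(k^2/N^2+k\tau^2)$. I expect the principal obstacle to be the loss of closure caused by the per-step independent divisions: because $\mu_t^N=\E_\xi\mu_t^{N,\xi}$ is an average, the marginal dynamics are non-autonomous, and reconciling the auxiliary batch coupling (which secures the sharp $k^2/N^2$) with the time-discrete law-of-large-numbers improvement (which secures $k\tau^2$), while preventing the two mechanisms from contaminating each other's rate, is the delicate point. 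Controlling the second-order Taylor remainder uniformly in $N$ and $k$---so that it genuinely scales like $k\tau^2$ and is not inflated by a growing moment factor---is the other place where the assumptions must be used with care.
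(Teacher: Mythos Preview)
Your overall architecture is right and matches the paper: the entropy BBGKY hierarchy on time-marginals, the split of $\mathcal{E}_i$ into a mean-field fluctuation (handled \`a la Lacker with the Donsker--Varadhan / large-deviation bound of Lemma~\ref{lem:LDP} and the sub-Gaussian propagation of Lemma~\ref{lem:sub-Gaussian}) and a batch fluctuation, the consistency identity, and the coupling of divisions. Where you diverge from the paper---and where I think there is a genuine gap---is in how the batch fluctuation is turned into the $k\tau^2$ source. You correctly observe that the batch residual vanishes at $t_n$ because $\mu_{t_n,n}^{N,\xi_n}=\mu_{t_n}^N$ is independent of $\xi_n$, but your proposed mechanism---a second-order Taylor expansion of the force increment plus a law of large numbers across the independent divisions $\xi_0,\xi_1,\dots$---does not match what is actually needed. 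At a fixed time $t\in[t_n,t_{n+1})$ only one division is active, so there is no averaging across steps to exploit; and the force $b$ is not frozen on the subinterval, so a Taylor expansion of a ``force increment'' is not the relevant object. The law of large numbers that appears in the paper (Lemma~\ref{lem:LDP}) is over \emph{particles}, not over time steps, and is used only for the mean-field term and for the coarse bound on $H_t^N$ (Lemma~\ref{lem:HN}) needed to terminate the hierarchy.

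What the paper actually does is use the division coupling for \emph{both} rates simultaneously: constructing $\xi_n^i$ so that $\mathbb{P}(j\in\xi_n(i)\mid\xi_n^i)=(p-1)/(N-1)$ (Proposition~\ref{lem:xini}) lets one write the expected batch drift as the full $N$-particle drift acting on $\mu_{t,n}^{N,\xi_n^i}$ \emph{plus} a residual $\mathbb{I}_k$ involving the density difference $\mu_{t,n}^{N,\xi_n}-\mu_{t,n}^{N,\xi_n^i}$. The first piece feeds directly into the Lacker machinery and gives $k^2/N^2$; the residual $\mathbb{I}_k$ carries the entire $\tau$-dependence. Its control is a PDE-level estimate (Proposition~\ref{prop:5.1}): one shows $\E\int|\mu_{t,n}^{N,\xi_n}-\mu_{t,n}^{N,\xi_n^i}|^4/(\mu_{t,n}^{N,\xi_n}+\mu_{t,n}^{N,\xi_n^i})^3\le C\e^{Ct}\tau^4$ by differentiating a Hellinger-type functional and bounding it via a propagated higher-order Fisher information (Lemmas~\ref{lem:5.1}--\ref{lem:Cor5.1}), which is where Assumption~\ref{ass:ini}(1) enters. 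After Cauchy--Schwarz and Young against $I_t^k$ this gives exactly $Ck\tau^2$ in \eqref{eq:estimateIk}. This density-difference estimate is the step you are missing; it is not a consequence of smoothness of $b$ alone, and your Taylor/LLN sketch does not supply it. The final point you should add explicitly is that closing the hierarchy requires the a priori bound $H_t^N\le C\e^{Ct}(1+N\tau^2)$ of Lemma~\ref{lem:HN}, obtained via the change-of-measure Lemma~\ref{lmm:changemeasure} and the exponential LLN, before Lacker's iterated-integral lemma (Lemma~\ref{lem:AB}) can be applied to extract \eqref{eq:thm1}.
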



Based on Theorem \ref{thm:1} and Pinsker's inequality \cite{pinsker1964information}, we are able to extend the propagation of chaos to that under total variation (TV) distance defined by
\begin{equation*}
     \|\mu -\nu\|_{TV} := \sup_{A\in \mathcal{E}} |\mu(A) - \nu(A)|,
\end{equation*}
for two probability measures $\mu$, $\nu$ defined on certain measure space $(E,\mathcal{E})$.
\begin{corollary}\label{coro:TV}
    Under the same settings of Theorem \ref{thm:1},  for $1 \leq k \leq N$, it holds that
    \begin{equation*}
            \|\mu_t^k - \bar{\mu}^{\otimes k}_t \|_{TV}\le C\sqrt{t}e^{Ct} \left(\frac{k}{N}+ \sqrt{k}\tau\right).
    \end{equation*}
\end{corollary}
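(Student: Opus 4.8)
The plan is to obtain the stated total variation bound as a direct consequence of the relative entropy estimate in Theorem \ref{thm:1} combined with Pinsker's inequality. Recall that Pinsker's inequality asserts that for any two probability measures $P$ and $Q$ on a common measurable space, one has $\|P-Q\|_{TV}\le \sqrt{H(P\mid Q)/2}$. Applying this with $P=\mu_t^k$ and $Q=\bar{\mu}_t^{\otimes k}$ and inserting the bound \eqref{eq:thm1} yields immediately
\begin{equation*}
    \|\mu_t^k - \bar{\mu}_t^{\otimes k}\|_{TV} \le \sqrt{\tfrac{1}{2}\,Ct\,\e^{Ct}\Big(\frac{k^2}{N^2}+k\tau^2\Big)}.
\end{equation*}

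The remaining work is elementary algebraic simplification of this right-hand side. First I would factor the square root as $\sqrt{C/2}\cdot\sqrt{t}\cdot \e^{Ct/2}\cdot \sqrt{k^2/N^2+k\tau^2}$, absorbing the numerical prefactor $\sqrt{C/2}$ and the halving of the exponent into a relabeled constant $C$ (noting $\e^{Ct/2}\le \e^{Ct}$). Then, using the subadditivity of the square root, namely $\sqrt{a+b}\le\sqrt{a}+\sqrt{b}$ for $a,b\ge 0$, I would bound $\sqrt{k^2/N^2+k\tau^2}\le k/N+\sqrt{k}\,\tau$. Collecting these steps gives the claimed estimate $\|\mu_t^k-\bar{\mu}_t^{\otimes k}\|_{TV}\le C\sqrt{t}\,\e^{Ct}\big(k/N+\sqrt{k}\,\tau\big)$, valid for all $1\le k\le N$.

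There is essentially no genuine obstacle in this argument, since the corollary follows directly from the theorem and a single standard inequality. The only point meriting a moment of attention is the splitting of the square root of the sum: this is precisely the step that converts the quadratic-type rate $k^2/N^2+k\tau^2$ governing the relative entropy into the corresponding linear-type rate $k/N+\sqrt{k}\,\tau$ for the total variation distance, as expected from the square-root scaling inherent in Pinsker's inequality.
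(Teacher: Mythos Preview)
Your proposal is correct and matches the paper's approach exactly: the paper states the corollary as an immediate consequence of Theorem~\ref{thm:1} together with Pinsker's inequality, and provides no further detail beyond that. Your algebraic simplification via $\sqrt{a+b}\le\sqrt{a}+\sqrt{b}$ is precisely what is needed to pass from the entropy rate $k^2/N^2+k\tau^2$ to the TV rate $k/N+\sqrt{k}\,\tau$.
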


\begin{remark}
    The Assumption \ref{ass:b0b1} implies that the kernel $b$ is allowed to have a linear growth. In this sense, our main result accommodates some unbounded kernel $b$ under appropriate regularity conditions, thereby relaxing the conventional boundedness assumption of $b$ in pervious RBM analysis \cite{jin2022mean,du2024collision,huang2025mean}.
\end{remark}

\begin{remark}
     If the interaction kernel $b$ is bounded (as in \cite{huang2025mean,grass2024sharp}), the sub-Gaussian assumption on the initial distribution $\mu_0$ can be relaxed. In this case, the second line of \eqref{eq:estimateIk} directly yields the estimate
    \begin{equation*}
        \int \mathbb{I}_k \log \Big(\frac{\mu_t^k}{\bar{\mu}_t^{\otimes k}}\Big)
   \le  \epsilon_3 I_t^k + \frac{C}{\epsilon_3}\e^{Ct}k\tau^2,
    \end{equation*}
    by estimating the term 
    \begin{equation*}
        \E \int |\mu_{t,n}^{N,\xi_n} - \mu_{t,n}^{N,\xi_n^i}|^2\,/\,(\mu_{t,n}^{N,\xi_n} + \mu_{t,n}^{N,\xi_n^i}),
    \end{equation*}
    in a similar way with Proposition \ref{prop:5.1}, without requiring the moment control provided by Lemma \ref{lem:momentcontrol}.
\end{remark}

\section{The dynamics of the RBM-driven distribution}\label{sec:xini}

As we mentioned in the introduction, the discrete randomness induced by the RBM prevents the direct application of the BBGKY hierarchies for path measures developed in \cite{lacker2023hierarchies,lacker2023sharp}. Consequently, we instead consider the hierarchies of the time-marginal distributions.

First, we derive the Liouville-type equations governing the dynamics of $\mu_t^k$.
Then, we establish a crucial local error estimate by introducing a coupling of the random batch division, serving as the preparation for the proof of Theorem \ref{thm:1}.

\subsection{Derivation of the dynamics of $k$-particle marginal distribution}

First, to deal with the time-discretized random structure of \eqref{eq:rbmLiouville}, we introduce an auxiliary distribution $\mu_{t,n}^{N,\xi_n}$ defined on time interval $[t_n,t_{n+1})$ and derive the PDE governing $\mu_t^k$ with respect to $\mu_{t,n}^{N,\xi_n}$.

Now we consider the distribution $\mu_{t,n}^{N,\xi_n}$ for $t\in[t_n, t_{n+1}]$ given by
\begin{equation*}
\mu_{t,n}^{N,\xi_n}=\E[\mu_{t}^{N,\xi} | \xi_m, m\ge n].
\end{equation*}
This means that the batch at $t_n$ is fixed to be $\xi_n$ while the randomness brought by batches before $t_n$ are averaged out. By the linearity of the Fokker-Planck equation, it is straightforward to see
\begin{equation*}
    \mu_{t_n,n}^{N,\xi_n} = \mu_{t_n}^N,
\end{equation*}
and for $t\in [t_n,t_{n+1}),$ $\mu_{t,n}^{N,\xi_n}$ satisfies
\begin{equation}\label{eq:rbmLmutn}
    \partial_t \mu_{t,n}^{N,\xi_n} = - \sum\limits_{i=1}^N \nabla_{x_i}\cdot\Big[\big(b_0(x_i) + \frac{1}{p-1}\sum_{\substack{\ell \in \xi_n(i)\\ \ell\ne i}}b(x_i - x_\ell)\big)\mu_{t,n}^{N,\xi_n}\Big] + \sigma\sum\limits_{i=1}^N \Delta_{x_i}\mu_{t,n}^{N,\xi_n}.
\end{equation}

Note that if we observe the random batch particles system at the discrete time $t_n$, it then forms a time-homogeneous Markov chain. It is clear that the law of $\tilde{X}$ at $t_n$ is given by 
\begin{equation*}
    \mu_{t_n}^N = \E_\xi \mu_{t_n}^{N,\xi}.
\end{equation*}
Hence, $\mu_{t,n}^{N,\xi_n}$ is just the law of the random batch particles with the batch at $t_n$ prescribed. By the Markov property, we can understand that the positions at $t_n$ are drawn from $\mu_{t_n}^N$, and they evolve according to the batch $\xi_n$ during $[t_n, t_{n+1}]$.

With the above understanding, it is clear that 
\begin{equation*}
\mu_t^N = \E_{\xi_n}\mu_{t,n}^{N,\xi_n} = \E_\xi \mu_t^{N,\xi},
\end{equation*}
for $t\in[t_n,t_{n+1})$.
Then one has
\begin{multline}\label{eq:mutk1}
\partial_t \mu_t^N 
= -\sum\limits_{i=1}^N \nabla_{x_i}\cdot \big(b_0(x_i)\mu_t^N\big)\\
-\sum\limits_{i=1}^N\frac{1}{p-1}\nabla_{x_i}\cdot\E_{\xi_n} \Big[\sum_{\substack{\ell \in \xi_n(i)
\ell\ne i}}b(x_i - x_\ell)\mu_{t,n}^{N,\xi_n}\Big] + \sigma\sum\limits_{i=1}^N\Delta_{x_i}\mu_t^N.
\end{multline}
Recalling $\mu_t^k:=\int \mu_t^N dx_{k+1}\cdots dx_N$, it holds
\begin{multline}\label{eq:mutkstarting}
    \partial_t \mu_t^k = - \sum\limits_{i=1}^k \nabla_{x_i}\cdot (b_0(x_i)\mu_t^k)+\sigma\sum\limits_{i=1}^k\Delta_{x_i}\mu_t^k\\
    - \sum\limits_{i=1}^k \int \frac{1}{p-1}  \nabla_{x_i}\cdot \E_{\xi_n}\Big[(\sum_{\substack{\ell \in \xi_n(i)\\ \ell\ne i}}b(x_i - x_\ell))\mu_{t,n}^{N,\xi_n}\Big]dx_{k+1}\cdots dx_N.
\end{multline}

\subsection{Introduction of auxiliary random batch divisions}\label{subsec:coupled sys}

Secondly, we decouple the system by introducing auxiliary random divisions. This is a key construction of our proof. Given a random batch division $\xi_n$ and a fixed $i$, we define the following auxiliary division $\xi_n^i$:
\begin{itemize}
    \item Draw a random variable $\zeta\sim \mathrm{UNIF}(0, 1)$, the uniform distribution on $(0, 1)$.
    \item If $\zeta\le (p-1)/(N-1)$, we set $\xi_n^i = \xi_n$.
    \item If $\zeta>(p-1)/(N-1)$, choose $j$ from $\{1,\cdots,N\}\backslash \xi_n(i)$ randomly with equal probability. Exchange the $p-1$ batchmates of Particle $j$ with the batchmates of Particle $i$, resulting the new division of batches $\xi_n^i$. In other words, based on
\begin{equation*}
    \xi_n(i) = \{i,i_1,\cdots,i_{p-1}\}, \quad \xi_n(j) = \{j,j_1,\cdots,j_{p-1}\},
\end{equation*}
we set
\begin{equation*}
    \xi_n^i(i) := \{i,j_1,\cdots,j_{p-1}\},\quad \xi_n^i(j) := \{j,i_1,\cdots, i_{p-1}\}.
\end{equation*}
\end{itemize}
The above just says we keep the batch division unchanged with probability $(p-1)/(N-1)$, and otherwise switch the batchmates of a random particle $j\notin \xi_n(i)$ with those of $i$. This auxiliary division satisfies the following useful property, which in fact reflects the intrinsic property of the random batches.

\begin{proposition}\label{lem:xini}
The coupled batch division $\xi_n^i$ is a random batch division in the sense that it has the same law of $\xi_n$.
In addition, for any $j\ne i$, and $\xi_n^i$ defined by Section \ref{subsec:coupled sys}, one has
    \begin{equation*}
    \mathbb{P}(j\in\xi_n(i)\mid \xi_n^i) = \frac{p-1}{N-1}.
\end{equation*}
\end{proposition}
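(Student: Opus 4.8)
The plan is to compute the law of $\xi_n^i$ and the conditional law of $\xi_n$ given $\xi_n^i$ directly, exploiting that the batchmate-swap is an involution. Write $\mathcal{P}$ for the set of admissible batch divisions and $M=|\mathcal{P}|$, so that $\mathbb{P}(\xi_n=\eta)=1/M$ for every $\eta\in\mathcal{P}$. For $\pi\in\mathcal{P}$ and $j\notin\pi(i)$, let $\Phi_{i,j}(\pi)$ denote the division obtained by exchanging the batchmates of $i$ and $j$ as in Section \ref{subsec:coupled sys}. First I would record the key algebraic fact: if $\pi'=\Phi_{i,j}(\pi)$ then $\pi'(i)=\{i\}\cup(\pi(j)\setminus\{j\})$ and $\pi'(j)=\{j\}\cup(\pi(i)\setminus\{i\})$; in particular $j\notin\pi'(i)$, distinct blocks being disjoint forces $\pi'\neq\pi$, and applying the swap once more recovers $\pi$, so $\Phi_{i,j}$ is an involution on $\{\pi:j\notin\pi(i)\}$.

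For the equality in law, fix a target $\eta\in\mathcal{P}$ and split $\{\xi_n^i=\eta\}$ according to whether the coupling kept the division ($\zeta\le(p-1)/(N-1)$) or performed a swap. The no-swap branch forces $\xi_n=\eta$ and contributes $\frac{1}{M}\cdot\frac{p-1}{N-1}$. For the swap branch, the involution shows that for each of the $N-p$ particles $j\notin\eta(i)$ there is exactly one original division $\xi_n=\Phi_{i,j}(\eta)$ producing $\eta$ after swapping batchmates of $i$ and $j$, and each such scenario has probability $\frac{1}{M}\cdot\frac{N-p}{N-1}\cdot\frac{1}{N-p}=\frac{1}{M(N-1)}$. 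Summing over the $N-p$ choices and adding the no-swap term yields
\begin{equation*}
  \mathbb{P}(\xi_n^i=\eta)=\frac{1}{M}\cdot\frac{p-1}{N-1}+(N-p)\cdot\frac{1}{M(N-1)}=\frac{1}{M},
\end{equation*}
so $\xi_n^i$ is again uniform on $\mathcal{P}$.

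For the second claim, the same decomposition identifies the conditional law of the original division given $\xi_n^i=\eta$: with conditional probability $\frac{p-1}{N-1}$ one has $\xi_n=\eta$ (hence $\xi_n(i)=\eta(i)$), and for each $j'\notin\eta(i)$ one has $\xi_n=\Phi_{i,j'}(\eta)$ (hence $\xi_n(i)=\{i\}\cup(\eta(j')\setminus\{j'\})$) with conditional probability $\frac{1}{N-1}$. Fixing $j\neq i$, it then remains to evaluate
\begin{equation*}
  \mathbb{P}(j\in\xi_n(i)\mid\xi_n^i=\eta)=\frac{1}{N-1}\Big(\mathbf{1}_{\{j\in\eta(i)\}}(p-1)+\sum_{j'\notin\eta(i)}\mathbf{1}_{\{j\in\eta(j')\setminus\{j'\}\}}\Big).
\end{equation*}
I would finish by a two-case check: if $j\in\eta(i)$ the sum vanishes because $j$ lies in a block disjoint from every $\eta(j')$, leaving $(p-1)/(N-1)$; if $j\notin\eta(i)$ the first term vanishes and the sum counts exactly the $p-1$ batchmates of $j$ in $\eta$, again giving $(p-1)/(N-1)$.

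The main obstacle is purely combinatorial bookkeeping rather than analysis: correctly describing the action of the swap on the blocks of $i$ and $j$, verifying it is an involution on the relevant set so the swap branch is enumerated by a clean bijection $j\mapsto\Phi_{i,j}(\eta)$, and keeping the two conditional cases straight. Once the involution and the conditional decomposition of $\xi_n$ given $\xi_n^i=\eta$ are in hand, both identities follow by elementary counting.
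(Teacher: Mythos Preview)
Your argument is correct and follows essentially the same route as the paper: both proofs split on whether $j\in\eta(i)$ and count preimages under the batchmate-swap, arriving at $(p-1)/(N-1)$ in each case via Bayes. Your explicit use of the involution $\Phi_{i,j}$ and the resulting full conditional law of $\xi_n$ given $\xi_n^i=\eta$ makes the bookkeeping cleaner---in particular it supplies the details behind the paper's one-line symmetry argument for the first claim---but the underlying mechanism is the same.
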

 \begin{proof}
  The first claim follows from the symmetry of the construction because there is no preference of batchmates among the different particles.

  Now, consider the second claim. Fix a possible value $\mathfrak{s}=\{\mathfrak{s}(1),\cdots, \mathfrak{s}(N)\}$ of $\xi_n^i$. Here, $\mathfrak{s}(i)$ indicates the group of size $p$ for particle $i$.
        
   If $j\in \mathfrak{s}(i)$, then by Bayesian's formula,
    \begin{equation}\label{eq:lemxini1}
        \mathbb{P}(j\in\xi_n(i)\mid \xi_n^i=\mathfrak{s}) = \frac{\mathbb{P}(j\in \xi_n(i), \xi_n^i=\mathfrak{s})}{\mathbb{P}(\xi_n^i=\mathfrak{s})}=\frac{\mathbb{P}(\xi_n=\mathfrak{s}, \xi_n^i=\xi_n)}{\mathbb{P}(\xi_n^i=\mathfrak{s})} = \frac{p-1}{N-1}.
    \end{equation}
    If $j\notin\mathfrak{s}(i)$, one has, by Bayesian's formula, that
    \begin{equation}\label{eq:lemxini2}
        \begin{aligned}
        \mathbb{P}(j\in\xi_n(i)\mid \xi_n^i=\mathfrak{s}) =& \frac{\mathbb{P}(j\in \xi_n(i), \xi_n^i=\mathfrak{s})}{\mathbb{P}(\xi_n^i=\mathfrak{s})}\\
        =&\sum_{\mathfrak{s}': j\in \mathfrak{s}'(i)}\frac{\mathbb{P}(\xi_n=\mathfrak{s}')\mathbb{P}(\xi_n^i=\mathfrak{s}|\xi_n=\mathfrak{s}')}{\mathbb{P}(\xi_n^i=\mathfrak{s})}\\
        =& \sum_{\mathfrak{s}': j\in \mathfrak{s}'(i)}\mathbb{P}(\xi_n^i=\mathfrak{s}|\xi_n=\mathfrak{s}').
    \end{aligned}
    \end{equation}
To compute this probability, we note that $\mathbb{P}(\xi_n^i=\mathfrak{s}|\xi_n=\mathfrak{s}')$ is nonzero for only $p-1$ possible $\mathfrak{s}'$ (one takes $k\in\mathfrak{s}(j)\setminus\{j\}$, then $\mathfrak{s}'(i)=\{i\}\cup \mathfrak{s}(j)\setminus\{k\}$ and $\mathfrak{s}'(k)=\{k\}\cup \mathfrak{s}(i)\setminus\{i\}$). For each such $\mathfrak{s}'$, 
\[
\mathbb{P}(\xi_n^i=\mathfrak{s}|\xi_n=\mathfrak{s}')=\left(1-\frac{p-1}{N-1}\right)*\frac{1}{N-p}=\frac{1}{N-1}.
\]
This then proves the result for the second case.
Now the proof is  finished.
\end{proof}

\begin{remark}
For the second case, an intuitive way is to consider
\[
\mathbb{P}(j\in\xi_n(i)\mid \xi_n^i=\mathfrak{s})
=\mathbb{P}(\xi_n\neq \xi_n^i \mid \xi_n^i=\mathfrak{s})
\mathbb{P}(j\in\xi_n(i)\mid \xi_n^i=\mathfrak{s}, \xi_n\neq \xi_n^i)
\]
Intuitively, the probability $\mathbb{P}(\xi_n\neq \xi_n^i \mid \xi_n^i=\mathfrak{s})=(N-p)/(N-1)$ because for every occurance of random batch, the probability $\xi_n\neq \xi_n^i$ is the same. The second probability $\mathbb{P}(j\in\xi_n(i)\mid \xi_n^i=\mathfrak{s}, \xi_n\neq \xi_n^i)$ should be $(p-1)/(N-p)$ due to a symmetry consideration as one should choose $p-1$ particles from $\{1,\cdots, N\}\setminus \mathfrak{s}(i)$ to make the batchmates of $i$. The probability that particle $j$ is selected should be $(p-1)/(N-p)$. The rigorous justification of this, however, should then be referred  to the Bayesian formula as above.
\end{remark}

The above fact about the constructed $\xi_n^i$ is crucial to deal with the interacting term. By Proposition \ref{lem:xini}, for each $j\ne i$,  one has
\begin{equation*}
    \mathbb{P}(j\in\xi_n(i)\mid \xi_n^i) = \frac{p-1}{N-1}.
\end{equation*}
Then,
\begin{multline}
\E_{\xi_n}\Big[\Big(\sum_{\substack{\ell \in \xi_n(i)\\ \ell\ne i}}b(x_i - x_\ell)\Big)\mu_{t,n}^{N,\xi_n}\Big]=\\
\E\Big[(\sum_{\substack{\ell \in \xi_n(i)\\ \ell\ne i}}b(x_i - x_\ell))\mu_{t,n}^{N,\xi_n^i}\Big]
+\E\Big[(\sum_{\substack{\ell \in \xi_n(i)\\ \ell\ne i}}b(x_i - x_\ell))(\mu_{t,n}^{N,\xi_n}-\mu_{t,n}^{N,\xi_n^i})\Big],
\end{multline}
where the expectation on the right hand side is the one on the probability space for $\xi_n$ and $\xi_n^i$, namely $\E_{\xi_n,\xi_n^i}$.
The first term can then be computed by
\begin{multline*}
 \E \Big[\frac{1}{p-1}\sum_{\substack{\ell \in \xi_n(i)\\ \ell\ne i}}b(x_i - x_\ell)\mu_t^{N,\xi_n^i}\Big]
 = \E \Bigg[\E\Big[ \big(\frac{1}{p-1}\sum_{\substack{\ell \in \xi_n(i)\\ \ell\ne i}}b(x_i - x_\ell)\mu_t^{N,\xi_n^i}\big)|\xi_n^i \Big]\Bigg]\\
 = \E \Big[\nabla_{x_i}\cdot \big(\frac{1}{N-1}\sum\limits_{j=1,\,j\ne i}^N b(x_i - x_j)\mu_t^{N,\xi_n^i}\big)\Big]
        =\frac{1}{N-1}\sum\limits_{j=1,\,j\ne i}^N  \Big(b(x_i - x_j)\mu_t^N\Big).
\end{multline*}

Define
\begin{equation*}
    \mathbb{I}_k : = \sum\limits_{i=1}^N \int \frac{1}{p-1}\E \Big[\nabla_{x_i}\cdot \Big(\sum_{\substack{\ell \in \xi_n(i)\\ \ell\ne i}}b(x_i - x_\ell)(\mu_{t,n}^{N,\xi_n}-\mu_{t,n}^{N,\xi_n^i})\Big)\Big]dx_{k+1}\cdots dx_N.
\end{equation*}
Noting that $\mu_t^N$ is a symmetric measure, one  can then rewrite \eqref{eq:mutkstarting} as
\begin{equation}\label{eq:mutk3}
\begin{split}
\partial_t \mu_t^k =& -\sum\limits_{i=1}^k \nabla_{x_i}\cdot \Big[\big(b_0(x_i)+\frac{1}{N-1}\sum\limits_{j=1,\,j\ne i}^kb(x_i - x_j)\big)\mu_t^k\Big]\\
        &- \sum\limits_{i=1}^k \nabla_{x_i}\cdot \Big[\frac{N-k}{N-1}\int b(x_i - x_{k+1})\mu_t^{k+1}dx_{k+1}\Big]\\
        &+ \sum\limits_{i=1}^k \sigma\Delta_{x_i}\mu_t^{k} - \mathbb{I}_k\\
        =&: \mathcal{L}_{N,k} \mu_t^N - \mathbb{I}_k.
\end{split}
\end{equation}
Here, we define the Liouville-type operator $\mathcal{L}_{N,k}$ for notational convenience. From \eqref{eq:mutk3}, it is evident that the law $\mu_t^N$ exhibits a structure similar to the case without the RBM, except for the additional residual terms $\mathbb{I}_k$.

\subsection{The discrepancy between the coupled random divisions}
Under the construction of $\xi_n^i$, we estimate the error terms arising from $\mu_{t,n}^{N,\xi_n^i}$. The proof relies on two lemmas, the details of which are provided in Section \ref{sec:auxlem}.
\begin{proposition}\label{prop:5.1}
    Under the assumptions of Theorem \ref{thm:1}, for any $t_n \le t \le \min \left\{t_{n+1}, T\right\}$ and any given $i$, the following holds
\begin{equation}\label{eq:prop5.1}
    \mathbb{E}  \int \left|\mu_{t,n}^{N,\xi_n}-\mu_{t,n}^{N, \xi_n^i}\right|^4 \,/\, \left(\mu_{t,n}^{N, \xi_n}+\mu_{t,n}^{N, \xi_n^i}\right)^3 \le C \e^{Ct} \tau^4,
\end{equation}
where the constant $C$ is independent of $N$.\\
\end{proposition}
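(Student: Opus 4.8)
The plan is to treat the two densities $f:=\mu_{t,n}^{N,\xi_n}$ and $g:=\mu_{t,n}^{N,\xi_n^i}$ as solutions on $[t_n,t_{n+1}]$ of Fokker--Planck equations of the type \eqref{eq:rbmLmutn} that share the \emph{same} initial datum $f(t_n)=g(t_n)=\mu_{t_n}^N$ (recall $\xi_n^i$ has the law of $\xi_n$ by Proposition \ref{lem:xini}) and the \emph{same} diffusion $\sigma\Delta$, their drifts differing only through the interaction of the swapped particles. Set $h:=f-g$, $s:=f+g$ and $w:=h/s\in[-1,1]$; since $\sigma>0$ parabolic positivity gives $s>0$ for $t>t_n$, so $w$ is well defined. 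The target is the $\phi$-divergence $\Phi(t):=\int h^4/s^3=\int s\,w^4$, with $\Phi(t_n)=0$. I would decompose the two drift vector fields as $Ff-Gg=\bar F\,h+\tfrac12\,\delta F\,s$ and $Ff+Gg=\bar F\,s+\tfrac12\,\delta F\,h$, where $\bar F=\tfrac12(F+G)$ and $\delta F=F-G$. Here $\delta F$ contains neither $b_0$ nor the unaffected interactions; by the construction of Section \ref{subsec:coupled sys} it is supported on the $O(p)$ particles whose batchmates are exchanged, each component being a $1/(p-1)$-weighted sum of values of the kernel $b$.

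Differentiating $\Phi$ and using $\partial_h[s\,w^4]=\psi'(w)$, $\partial_s[s\,w^4]=\psi(w)-w\psi'(w)$ with $\psi(w)=w^4$, I expect three groups of terms. First, the common transport field $\bar F$ (which carries the superlinear $b_0$) contributes exactly \emph{zero}: since $h$ and $s$ are advected by the same $\bar F$, the ratio $w$ is transported and $\int s\,\psi(w)$ is conserved along the flow, an algebraic cancellation one verifies by integration by parts (the only role of $b_0$ here is to supply, via the sub-Gaussian moment control of Lemma \ref{lem:momentcontrol}, the decay that kills boundary terms). Second, the diffusion yields the clean nonpositive dissipation $-12\sigma\int s\,w^2|\nabla w|^2\le 0$ (the $\nabla s$ cross-terms cancel as well). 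Third — the only genuine driver — the $\delta F$-source reduces, after one integration by parts, to
\begin{equation*}
S=\tfrac12\int s\,\psi''(w)\,(1-w^2)\,\nabla w\cdot\delta F
 =6\int s\,w^2(1-w^2)\,\nabla w\cdot\delta F .
\end{equation*}

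The crux is to bound $S$ \emph{sharply}. A Young's inequality absorbing $\nabla w$ into the dissipation only produces $\int s\,w^2|\delta F|^2\lesssim\Phi^{1/2}$, which Grönwall turns into the lossy $\Phi=\mathcal O(\tau^2)$. To reach $\tau^4$ I would instead eliminate $\nabla w$ by writing $w^2(1-w^2)\nabla w=\nabla G(w)$ with $G(w)=\tfrac13w^3-\tfrac15w^5$ and integrating by parts once more,
\begin{equation*}
S=-6\int G(w)\,s\,\nabla\!\cdot\!\delta F-6\int G(w)\,\delta F\cdot\nabla s .
\end{equation*}
Since $|G(w)|\le C|w|^3$ and $\int s|w|^3\le(\int s)^{1/4}(\int s\,w^4)^{3/4}=2^{1/4}\Phi^{3/4}$, the first term is $\le C\|\nabla\!\cdot\!\delta F\|_\infty\Phi^{3/4}$, finite because $b$ has bounded derivatives. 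For the second I would write $\nabla s=s\,\nabla\log s$ and apply Hölder to get $\le C\,\Phi^{3/4}\big(\int s\,|\delta F|^4|\nabla\log s|^4\big)^{1/4}$, where the last factor is controlled uniformly in $N$ by combining the moment bounds (Lemma \ref{lem:momentcontrol}, exploiting the linear growth of $b$ and the sub-Gaussian initial law) with a Fisher-information estimate for $s$ from Section \ref{sec:auxlem}; crucially $\delta F$ only sees $O(p)$ particle directions and carries the $1/(p-1)$ weights, so this factor is $N$-independent and at most $C\e^{Ct}$.

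Collecting these estimates gives the pathwise inequality $\tfrac{d}{dt}\Phi\le -D+S\le C_\xi\,\Phi^{3/4}$, where the dissipation $D\ge0$ is simply discarded. Taking expectations and using Hölder in $\omega$ yields $\tfrac{d}{dt}\E\Phi\le\bar C\,(\E\Phi)^{3/4}$ with $\bar C=(\E C_\xi^4)^{1/4}\le C\e^{Ct}$ independent of $N$. Substituting $v:=(\E\Phi)^{1/4}$ turns this into $v'\le\tfrac14\bar C$ with $v(t_n)=0$, so $v(t)\le\tfrac14\bar C\,(t-t_n)\le C\e^{Ct}\tau$ and $\E\Phi=v^4\le C\e^{Ct}\tau^4$, which is \eqref{eq:prop5.1}. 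I expect the main obstacle to be exactly this sharp bound on $S$: the dissipation-absorbing estimate is off by a factor $\tau^2$, and recovering the correct order forces the extra integration by parts and hence the $N$-uniform control of the Fisher-information-weighted fourth moment $\int s\,|\delta F|^4|\nabla\log s|^4$ of the drift difference — this is where the two auxiliary lemmas and the sub-Gaussian hypothesis are indispensable, and where the coupling of Section \ref{subsec:coupled sys} pays off by keeping $\delta F$ small and localized uniformly in $N$.
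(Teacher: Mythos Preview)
Your proposal is correct and follows essentially the same route as the paper: the paper derives the identical dissipation identity (Appendix \ref{app:prop5.1} records it as $\tfrac{d}{dt}\int|\delta f|^4/(f+\tilde f)^3\le -8\int w^3\,\nabla\!\cdot\!(\delta b^i\,f\tilde f/(f+\tilde f))$, which is your $S$ after one integration by parts), then uses H\"older to extract the $3/4$-power and bounds the residual factor via the moment control of Lemma \ref{lem:momentcontrol} and the generalized Fisher estimates of Lemmas \ref{lem:5.1}--\ref{lem:Cor5.1}, exactly the two auxiliary inputs you anticipate. The only cosmetic differences are that the paper sums over $i$ first (setting $E(t)=\E\sum_i\Phi_i$ and recovering a single $i$ by symmetry at the end) and keeps the factor $(1-w^2)$ with the density rather than absorbing it into your antiderivative $G(w)$; neither changes the argument.
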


\begin{proof}
Denote
\begin{equation*}
    E(t):=\E \sum\limits_{i=1}^N \int \frac{\left|\mu_{t,n}^{N,\xi_n}-\mu_{t,n}^{N, \xi_n^i}\right|^4}{\left(\mu_{t,n}^{N, \xi_n}+\mu_{t,n}^{N, \xi_n^i}\right)^3}.
\end{equation*}
At the time point $t_n$, since $\mu_{t,n}^{N,\xi}$ (and also $\mu_{t,n}^{N,\xi_n^i}$) is equal to $\mu_{t_n}^N$,  $E(t_n) =0$. Below, we will basically show that
\begin{equation}\label{eq:prop1:ode}
    \frac{d}{dt}E(t)^{\frac{1}{4}} \le C N^{\frac{1}{4}}\e^{Ct}.
\end{equation}
Then the symmetry with respect to $i$ gives \eqref{eq:prop5.1}.

Now we derive \eqref{eq:prop1:ode}. For notation simplicity, we fix $i$ and denote
\begin{align*}
& f:=\mu_{t}^{N, \xi_n}, \quad \tilde{f}:=\mu_{t}^{N,\xi_n^i},  \quad \delta f:=f-\tilde{f}; \\
& b^{\xi_n}:=
    \begin{bmatrix}
        b_0(x_1) \\
        \vdots \\
        b_0(x_N)
    \end{bmatrix}
    +\frac{1}{p-1}
    \begin{bmatrix}
        \sum\limits_{\ell \in \xi_n(1)} b\left(x_1 - x_\ell\right) \\
        \vdots \\
        \sum\limits_{\ell \in \xi_n(N)} b\left(x_N - x_\ell\right)
    \end{bmatrix},\quad
 \delta b^i:=b^{\xi_n}-b^{\xi_n^{i}}.
\end{align*}
By straightforward computation, one has that
\begin{equation}\label{eq:ddt prop5.1}
\frac{d}{d t} \int \frac{|\delta f|^4}{(f+\tilde{f})^3} 
 \le -8 \int \left(\frac{\delta f}{f + \tilde{f}}\right)^3 \nabla_x \cdot \left(\delta b^i \frac{f\tilde{f}}{f+\tilde{f}}\right).
\end{equation}
We leave the computational details in Appendix \ref{app:prop5.1}.
By H\"older's inequality, it holds
\begin{equation}\label{eq:est prop5.1}
\frac{d}{d t} E(t) \le 8(E(t))^{\frac{3}{4}}\left\{\mathbb{E} \sum_{i=1}^{N} \int \frac{1}{(f+\tilde{f})^3}\left(\sum_{j=1}^{N} \nabla_{x_j} \cdot\left(\frac{f \tilde{f}}{f+\tilde{f}} \delta b^i\right)\right)^4\right\}^{\frac{1}{4}}.
\end{equation}

Recall the notation \eqref{ntt:xini} and set the randomly selected particle from $\{1,\cdots,N\}\backslash\{i\}$ to be $\tilde{i}$. Note that in $\delta b^i$, only the rows of $\xi_n(i)\cup\xi_n(\tilde{i})$ are potentially nonzero terms. We denote $\xi_n(\tilde{i}): = \{\tilde{i},\tilde{i}_1,\cdots,\tilde{i}_{p-1}\}$ and $\Lambda_i : = \xi_n(i)\cup \xi_n(\tilde{i})$.
Then, it holds
\begin{align}
\notag& \left| \sum_{j=1}^{N} \nabla_{x_{j}} \cdot\left(\frac{f \tilde{f}}{f+\tilde{f}} \delta b^i\right)\right|^4 \frac{1}{(f+\tilde{f})^3} \\
\notag& \lesssim \left| \sum_{j \in \Lambda_i}\left[|f \tilde{f}|\frac{\|\nabla b\|_\infty}{p-1} + |f \tilde{f}_j\delta b^i |+|f_j \tilde{f}\delta b^i | + |\delta b^i| |f_j+\tilde{f}_j| f \right]\right|^4 \left(\frac{1}{f+\tilde{f}}\right)^7 \\
& \lesssim \frac{1}{p} \sum_{j \in \Lambda_i}\left(\left| f \right|+\frac{|f_j (p-1) \delta b^i |^4}{(f+\tilde{f})^3} + \frac{|\tilde{f}_j (p-1) \delta b^i |^4}{(f+\tilde{f})^3}\right),\label{aux eq:prop5.1}
\end{align}
where the subscript $j$ of $f$ denotes $\frac{\partial}{\partial x_j}$ of $f$. Here, $\lesssim \cdot$ means $\le C \cdot$ for some constant $C$.
Note that by H\"older's inequality,
\begin{multline}
    \E \sum\limits_{i=1}^N \frac{1}{p}\int\sum_{j \in \Lambda_i}\frac{|f_j (p-1) \delta b^i |^4}{(f+\tilde{f})^3} \\
    \le \sum\limits_{i=1}^N \frac{1}{p} \left( \E \sum\limits_{j\in \Lambda_i} \int \frac{|f_j|^8}{(f+\tilde{f})^7}\right)^{\frac{1}{2}}\left( \E \sum\limits_{j\in \Lambda_i} \int |(p-1)\delta b^i |^8 (f+\tilde{f})\right)^{\frac{1}{2}}.
\end{multline}
We consider the first term
\begin{equation*}
    \int \frac{|f_j|^8}{(f+\tilde{f})^7} \le \int \frac{|f_j|^8}{f} = \int |\nabla_{x_j}\log f|^8 f.
\end{equation*}
By Lemma \ref{lem:5.1}, for $t\in[t_n,t_{n+1})$, one has 
\begin{equation*}
\begin{aligned}
    \E \sum\limits_{\ell \in \xi_n(j)} \int \frac{|f_\ell |^8}{f} \le& Cp(t-t_n) + \E\,\e^{C(t-t_n)}\sum\limits_{\ell\in\xi_n(j)} \hat{I}_8^\ell(t_n)\\
    \le& Cp(t-t_n) + C\E\frac{p}{N}\sum\limits_{\ell=1}^N\int \frac{|f_\ell|^8}{f}(t_n),
\end{aligned}
\end{equation*}
since $\xi_n$ is independent of $f(t_n)$.
Then, by Lemma \ref{lem:Cor5.1}, it holds
\begin{equation*}
    \E\frac{p}{N}\sum\limits_{\ell=1}^N\int \frac{|f_\ell|^8}{f^7}(t_n) \le C p \e^{Ct_n},
\end{equation*}
and then
\begin{equation*}
    \E \sum\limits_{i=1}^N \frac{1}{p}\sum\limits_{j\in \Lambda_i}\int \frac{|f_j|^8}{f}(t)\le CN(t-t_n) + CN\e^{Ct_n}\le C\e^{Ct}N,
\end{equation*}
since there are at most $2p$ particles contained in $\Lambda_i$. 

In addition, combining the moments control in Lemma \ref{lem:momentcontrol} and the Lipschitz assumption of $b$, one has
\begin{equation*}
    \E \sum\limits_{j\in \Lambda_i} \int |(p-1)\delta b^i |^8 (f+\tilde{f}) \lesssim p.
\end{equation*}

Therefore, one obtains
\begin{equation}\label{eq:fprop5.1}
    \E \sum\limits_{i=1}^N \frac{1}{p}\int\sum_{j \in \Lambda_i}\frac{|f_j (p-1) \delta b^i |^4}{(f+\tilde{f})^3} \le \sum\limits_{i=1}^N \frac{1}{p} C\sqrt{p} \e^{Ct}\sqrt{p} \le CN \e^{Ct}.
\end{equation}
Since both $\xi_n$ and $\xi_n^i$ are both valid random batch divisions of the RBM, the estimate of 
\begin{equation}\label{eq:tildefprop5.1}
    \E \sum\limits_{i=1}^N \frac{1}{p}\int\sum_{j \in \Lambda_i}\frac{|f_j (p-1) \delta b^i |^4}{(f+\tilde{f})^3} \le CN \e^{Ct},
\end{equation}
is similar,

Combining the above estimates \eqref{eq:est prop5.1}, \eqref{aux eq:prop5.1}, \eqref{eq:fprop5.1} and \eqref{eq:tildefprop5.1}, one gets 
\begin{equation*}
    \frac{d}{d t} E(t) \le C E(t)^{\frac{3}{4}} N^{\frac{1}{4}} e^{Ct} \quad \Longrightarrow \quad E(t) \le C N\e^{Ct}(t-t_n)^4.
\end{equation*}
Now the proof is finished.  
\end{proof}

\section{Proof of Theorem \ref{thm:1}}\label{sec:proof}

In this section, we present the proof of Theorem \ref{thm:1} which contains three steps. First, we obtain a series of primer inequality estimate using the BBGKY hierarchies. This step needs to separately analyze the decoupled part and bound the remaining interactions based on the above section. Second, we establish  the boundedness of $H_t^N$ by the Large Derivation Principle. Finally, we obtain the convergence order using the bootstrapping technique.

\subsection{Derivation of the ODEs for the local relative entropy}\label{subsec:derivation}

since 
\begin{equation*}
    \partial_t \mu_t^k + \mathbb{I}_k = \mathcal{L}_{N,k}\mu_t^N.
\end{equation*},
 we first compute
\begin{equation*}
    \begin{aligned}
        \frac{d}{dt}H(\mu_t^k\,|\, \bar{\mu}_t^{\otimes k}) 
        =& \int \partial_t \mu_t^k \log \Big(\frac{\mu_t^k}{\bar{\mu}_t^{\otimes k}}\Big) - \partial_t \log (\bar{\mu}_t^{\otimes k})\mu_t^k\\
        =& \int \mathcal{L}_{N,k}\mu_t^N \log \Big(\frac{\mu_t^k}{\bar{\mu}_t^{\otimes k}}\Big) - \int \partial_t \log (\bar{\mu}_t^{\otimes k})\mu_t^k - \int \mathbb{I}_k \log \Big(\frac{\mu_t^k}{\bar{\mu}_t^{\otimes k}}\Big).
    \end{aligned}
\end{equation*}
Recall the operator $\mathcal{L}_{N,k}$ is defined in \eqref{eq:mutk3}.
Note that the first two terms are formally analogous to the case without the RBM. We set
\begin{equation*}
    \int \mathcal{L}_{N,k}\mu_t^N\, \log \Big(\frac{\mu_t^k}{\bar{\mu}_t^{\otimes k}}\Big) - \int \partial_t \log (\bar{\mu}_t^{\otimes k})\mu_t^k := \mathcal{A} - \mathcal{B}.
\end{equation*}
Then, by \eqref{eq:mutk3} and integration by parts, it holds that
\begin{align*}
        \mathcal{A} =& \int \mathcal{L}_{N,k}\mu_t^N \log \Big(\frac{\mu_t^k}{\bar{\mu}_t^{\otimes k}}\Big) \\
        =& \int \sum\limits_{i=1}^k \Big[\big(b_0 (x_i) + \frac{1}{N-1}\sum\limits_{j=1,\,\,j\ne i}^k b(x_i - x_j)\big) \mu_t^k\Big]\cdot \nabla_{x_i} \log \Big(\frac{\mu_t^k}{\bar{\mu}_t^{\otimes k}}\Big) \\
        &+ \int \sum\limits_{i=1}^k \frac{N-k}{N-1} b(x_i - x_{k+1})\mu_t^{k+1} \cdot \nabla_{x_i} \log \Big(\frac{\mu_t^k}{\bar{\mu}_t^{\otimes k}}\Big) \\
        &- \int \sum\limits_{i=1}^k \sigma \mu_t^k \nabla_{x_i} \log \mu_t^k \cdot \nabla_{x_i}\log \Big(\frac{\mu_t^k}{\bar{\mu}_t^{\otimes k}}\Big), 
    \end{align*}
and
    \begin{align*}
        \mathcal{B}=& \int \partial_t \log(\bar{\mu}_t^{\otimes k})\mu_t^k = \int \partial_t \bar{\mu}_t^{\otimes k} \frac{\mu_t^k}{\bar{\mu}_t^{\otimes k}}\\
        =& -\sum\limits_{i=1}^k \int \sigma \mu_t^k \nabla_{x_i}\log \bar{\mu}^{\otimes k}_t\cdot \nabla_{x_i}\log \Big(\frac{\mu_t^k}{\bar{\mu}_t^{\otimes k}}\Big)\\
        &+ \sum\limits_{i=1}^k \int \big(b_0 (x_i) + b* \bar{\mu}_t(x_i)\big)\mu_t^k\cdot \nabla_{x_i} \log \Big(\frac{\mu_t^k}{\bar{\mu}_t^{\otimes k}}\Big).
    \end{align*}
Then, the following holds
\begin{equation}\label{eq:A-B}
        \mathcal{A}-\mathcal{B} 
        = -\sigma\sum\limits_{i=1}^k \int \mu_t^k \Big|\nabla_{x_i} \log \Big(\frac{\mu_t^k}{\bar{\mu}_t^{\otimes k}}\Big)\Big|^2 + \mathcal{J}_k +\mathcal{K}_k,
\end{equation}
where
\begin{equation}\label{eq:nttJ}
    \mathcal{J}_k:= \sum\limits_{i=1}^k \int\Big( \mu_t^k \sum\limits_{j=1,\,\,j\ne i}^k
    \frac{1}{N-1}b(x_i - x_j)-\mu_t^k\frac{k-1}{N-1}b*\bar{\mu}_t(x_i)\Big)\cdot\nabla_{x_i}\log(\frac{\mu_t^k}{\bar{\mu}_t^{\otimes k}}),
\end{equation}
and
\begin{equation}\label{eq:nttK}
    \mathcal{K}_k:= \frac{N-k}{N-1}\sum\limits_{i=1}^k \int \big(\mu_t^{k+1}b(x_i - x_{k+1})- \mu_t^k b*\bar{\mu}_t(x_i)\big)\cdot \nabla_{x_i}\log(\frac{\mu_t^k}{\bar{\mu}_t^{\otimes k}}).
\end{equation}
For the term $\mathcal{J}_k$, Young's inequality yields
\begin{equation}\label{eq:Jestimate1}
    \mathcal{J}_k\le \sum\limits_{i=1}^k \epsilon_1\int \mu_t^k \bigg|\nabla_{x_i}\log(\frac{\mu_t^k}{\bar{\mu}_t^{\otimes k}})\bigg|^2 + \frac{C}{\epsilon_1 N^2}\sum\limits_{i=1}^k \int \mu_t^k \bigg|\sum\limits_{j=1,\,j\ne i}^k b(x_i - x_j) - (k-1)b*\bar{\mu}_t (x_i)\bigg|^2.
\end{equation}
For the term $\mathcal{K}_k$, we introduce the notation $\mu_t^{k+1|k}(x_{k+1}\mid x_1,\cdots,x_k)$ which represents the conditional distribution at time $t$ of the first $k+1$ particles, given the first $k$ particles.
Then, since $\|\nabla b\|_\infty <\infty$, Lemma \ref{lem:CKP} and Lemma \ref{lem:sub-Gaussian} yield
\begin{equation*}
    |\langle b(x_i - \cdot), \mu_t^{k+1|k}-\bar{\mu}_t \rangle| \le C\sqrt{H(\mu_t^{k+1|k}\mid \bar{\mu}_t)}.
\end{equation*}
Combining with Young's inequality, it holds
\begin{equation}\label{eq:Kestimate1}
    \begin{aligned}
        \mathcal{K}_k=& \frac{N-k}{N-1}\sum\limits_{i=1}^k \int \mu_t^k \nabla_{x_i} \log \Big(\frac{\mu_t^k}{\bar{\mu}_t^{\otimes k}}\Big) \cdot \left<b(x_i - \cdot),\mu_t^{k+1|k} - \bar{\mu}_t \right>\\
        \le & \frac{N-k}{N-1}\sum\limits_{i=1}^k \epsilon_2 \int \mu_t^k \Big|\nabla_{x_i} \log \Big(\frac{\mu_t^k}{\bar{\mu}_t^{\otimes k}}\Big)\Big|^2 
        +\frac{N-k}{N-1}\frac{C}{4\epsilon_2}\sum\limits_{i=1}^k \int \mu_t^k  H(\mu_t^{k+1|k}|\bar{\mu}_t).
    \end{aligned}
\end{equation}
We denote $I_t^k:= \sum\limits_{i=1}^k \int \mu_t^k |\nabla_{x_i}\log \Big(\frac{\mu_t^k}{\bar{\mu}_t^{\otimes k}}\Big)|^2$ as the relative Fisher information. 
By the chain rule of the relative entropy, one has
\begin{equation*}
    \int \mu_t^k H(\mu_t^{k+1|k}\mid \bar{\mu}_t) = H_t^{k+1} - H_t^k.
\end{equation*}
Then combining \eqref{eq:A-B}, \eqref{eq:Jestimate1} and \eqref{eq:Kestimate1}, one gets
\begin{align*}
    \frac{d}{dt}H(\mu_t^k\,|\, \bar{\mu}_t^{\otimes k}) \le& (-\sigma +\epsilon_1 + \epsilon_2)I_t^k + \frac{C}{\epsilon_1 N^2}\sum\limits_{i=1}^k \int \mu_t^k \bigg|\sum\limits_{j=1,\,j\ne i}^k b(x_i - x_j) - (k-1)b*\bar{\mu}_t (x_i)\bigg|^2\\ &+ \frac{C}{\epsilon_1}\frac{(N-k)k}{N}(H_t^{k+1}-H_t^k) - \int \mathbb{I}_k \log \Big(\frac{\mu_t^k}{\bar{\mu}_t^{\otimes k}}\Big).
\end{align*}
For the last term, by Proposition \ref{prop:5.1}, it holds
\begin{equation*}
    \E \int \frac{|\mu_{t,n}^{N,\xi_n} - \mu_{t,n}^{N,\xi_n^i}|^4}{(\mu_{t,n}^{N,\xi_n} + \mu_{t,n}^{N,\xi_n^i})^3} \le C\tau^4 \e^{Ct}.
\end{equation*}
Hence, combining the boundedness assumption of $b$, one has 
\begin{align}
 \int& \mathbb{I}_k \log \Big(\frac{\mu_t^k}{\bar{\mu}_t^{\otimes k}}\Big)=
         -\sum\limits_{i=1}^k \int \E \bigg[\sum_{\substack{\ell\in\xi_n(i)\\ \ell \ne i}}\frac{b(x_i - x_\ell)}{p-1} (\mu_{t,n}^{N,\xi_n} - \mu_{t,n}^{N,\xi_n^i})\bigg]\cdot\nabla_{x_i}\log \Big(\frac{\mu_t^k}{\bar{\mu}_t^{\otimes k}}\Big)\notag\\
    \le& \sum\limits_{i=1}^k \epsilon_3 \int \Big| \nabla_{x_i}\log \Big(\frac{\mu_t^k}{\bar{\mu}_t^{\otimes k}}\Big) \Big|^2 \mu_t^{N} \notag\\
        &+ \frac{C}{\epsilon_3} \sum\limits_{i=1}^k \int \E \Big|\sum_{\substack{\ell\in\xi_n(i)\\ \ell \ne i}}\frac{b(x_i - x_\ell)}{p-1}\Big|^2 \frac{|\mu_{t,n}^{N,\xi_n} - \mu_{t,n}^{N,\xi_n^i}|^2}{\mu_{t,n}^{N,\xi_n} + \mu_{t,n}^{N,\xi_n^i}}\notag\\
    \le& \sum\limits_{i=1}^k \epsilon_3 \int \Big| \nabla_{x_i}\log \Big(\frac{\mu_t^k}{\bar{\mu}_t^{\otimes k}}\Big) \Big|^2 \mu_t^{N} \notag\\
        &+ \frac{C}{\epsilon_3} \sum\limits_{i=1}^k \left(\E \int\Big|\sum_{\substack{\ell\in\xi_n(i)\\ \ell \ne i}}\frac{b(x_i - x_\ell)}{p-1}\Big|^4(\mu_{t,n}^{N,\xi_n} + \mu_{t,n}^{N,\xi_n^i}) \right)^{\frac{1}{2}}\left(\E \int \frac{|\mu_{t,n}^{N,\xi_n} - \mu_{t,n}^{N,\xi_n^i}|^4}{(\mu_{t,n}^{N,\xi_n} + \mu_{t,n}^{N,\xi_n^i})^3}\right)^{\frac{1}{2}}\notag\\
   \le & \epsilon_3 I_t^k + \frac{C}{\epsilon_3}\e^{Ct}k\tau^2, \label{eq:estimateIk}
\end{align}
    by Young's inequality and the Cauchy-Schwarz inequality.
Then, one obtains
\begin{equation}\label{eq:dHtk}
\begin{aligned}
    \frac{d}{dt}H_t^k\le& (-\sigma +\epsilon_1 + \epsilon_2 +\epsilon_3)I_t^k + \frac{C}{\epsilon_1 N^2}\sum\limits_{i=1}^k \int \mu_t^k \bigg|\sum\limits_{j=1,\,j\ne i}^k b(x_i - x_j) - (k-1)b*\bar{\mu}_t (x_i)\bigg|^2\\
    &+ \frac{C}{\epsilon_2}\frac{(N-k)k}{N}(H_t^{k+1}-H_t^k) + \frac{C}{\epsilon_3}\e^{Ct}k\tau^2.
\end{aligned}
\end{equation}
Taking $\epsilon_i,$ $i=1,2,3$, small enough such that $\sum\limits_{i=1}^3\epsilon_i<\sigma$, then
\begin{equation}\label{eq:dHtk2}
    \frac{d}{dt}H_t^k \le \frac{C}{N^2}\sum\limits_{i=1}^k \int \mu_t^k \bigg|\sum\limits_{j=1,\,j\ne i}^k b(x_i - x_j) - (k-1)b*\bar{\mu}_t (x_i)\bigg|^2 + C \e^{Ct}k\tau^2 + C k (H_t^{k+1}-H_t^{k}).
\end{equation}

\subsection{Boundedness of $H_t^N$}\label{subsec:boundHtN}

Before dealing with the iteration inequality \eqref{eq:dHtk2}, we first establish a coarse bound for $H_t^N$ by the Large Derivation Principle (details are provided in Section \ref{subsec: aux ineq}).
\begin{lemma}\label{lem:HN}
    Under the assumptions of Theorem \ref{thm:1}, it holds
    \begin{equation}\label{eq:HN}
        H_t^N \le C\e^{Ct} + C\e^{Ct}N\tau^2,
    \end{equation}
    where $C$ is independent of $N$.
\end{lemma}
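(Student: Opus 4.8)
The plan is to specialise the entropy hierarchy \eqref{eq:dHtk2} to the top level $k=N$ and then close the resulting scalar ODE by a Gr\"onwall argument. The essential simplification at $k=N$ is that the hierarchy coupling term disappears: tracing back to \eqref{eq:dHtk}, the factor multiplying $H_t^{k+1}-H_t^k$ is proportional to $(N-k)k/N$, which vanishes at $k=N$ (and $H_t^{N+1}$ is in any case undefined). Thus \eqref{eq:dHtk2} collapses to
\[
\frac{d}{dt}H_t^N \le \frac{C}{N^2}\sum_{i=1}^N \int \mu_t^N \Big|\sum_{j=1,\,j\ne i}^N b(x_i-x_j)-(N-1)\,b*\bar\mu_t(x_i)\Big|^2 + C\e^{Ct}N\tau^2 .
\]
The whole difficulty is therefore concentrated in the first term, a law-of-large-numbers (fluctuation) term, which I would aim to control by $C H_t^N + C\e^{Ct}$.

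To bound this fluctuation term I would transfer the integral from the true law $\mu_t^N$ to the tensorised reference $\bar\mu_t^{\otimes N}$ via the Gibbs variational principle. Setting
\[
\Phi := \frac{1}{N^2}\sum_{i=1}^N \Big|\sum_{j\ne i}\big(b(x_i-x_j)-b*\bar\mu_t(x_i)\big)\Big|^2 ,
\]
the Donsker--Varadhan inequality gives, for every $\eta>0$,
\[
\int \mu_t^N \Phi \le \frac{1}{\eta}H_t^N + \frac{1}{\eta}\log\int \bar\mu_t^{\otimes N}\e^{\eta\Phi}.
\]
It then remains to show the exponential moment $\int \bar\mu_t^{\otimes N}\e^{\eta\Phi}$ is bounded, uniformly in $N$, for a fixed small $\eta$. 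Under $\bar\mu_t^{\otimes N}$ the coordinates are i.i.d.; conditioning on $x_i$, the summands $b(x_i-x_j)-b*\bar\mu_t(x_i)$ are i.i.d.\ and centred, so each inner sum is a centred fluctuation of size $\sqrt N$ and $\Phi=\mathcal O(1)$. A Jabin--Wang type large deviation (concentration) estimate then yields $\log\int\bar\mu_t^{\otimes N}\e^{\eta\Phi}\le C\eta\,\e^{Ct}$, whence, fixing $\eta$,
\[
\frac{C}{N^2}\sum_{i=1}^N\int\mu_t^N\big|\cdots\big|^2 = C\int\mu_t^N\Phi \le C H_t^N + C\e^{Ct}.
\]
Substituting into the ODE gives $\tfrac{d}{dt}H_t^N \le C H_t^N + C\e^{Ct}+C\e^{Ct}N\tau^2$, and since the initial data are i.i.d.\ $\mu_0$ with $\bar\mu_0=\mu_0$ we have $H_0^N=0$; Gr\"onwall's inequality then yields \eqref{eq:HN} after absorbing the polynomial-in-$t$ prefactor into the exponential.

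I expect the main obstacle to be the uniform-in-$N$ exponential moment bound on $\int\bar\mu_t^{\otimes N}\e^{\eta\Phi}$. Two features make this delicate. First, the fluctuations $\phi_i:=\tfrac{1}{N-1}\sum_{j\ne i}(b(x_i-x_j)-b*\bar\mu_t(x_i))$ are \emph{not} independent across $i$, since they share coordinates, so the exponential cannot be factorised and one must invoke a combinatorial / U-statistic large deviation estimate in the spirit of Jabin--Wang rather than a crude tensorisation. Second, because $b$ is only assumed Lipschitz (hence possibly of linear growth), the summands are unbounded, so the concentration must be obtained through sub-Gaussian tail control of $\bar\mu_t$ (Lemma \ref{lem:sub-Gaussian}) together with the moment control of Lemma \ref{lem:momentcontrol}; this is precisely where the time-dependent factor $\e^{Ct}$ in \eqref{eq:HN} originates. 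If instead $b$ is bounded, the summands are bounded and a Hoeffding-type argument makes the exponential moment bound uniform in both $N$ and $t$.
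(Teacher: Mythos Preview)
Your proposal is correct and matches the paper's approach: specialise to $k=N$ (where $\mathcal K_N=0$), use the change-of-measure / Donsker--Varadhan inequality (Lemma~\ref{lmm:changemeasure}) to transfer the fluctuation term to $\bar\mu_t^{\otimes N}$, bound the resulting exponential moment by a large-deviation estimate (Lemma~\ref{lem:LDP}) using the sub-Gaussianity of $\bar\mu_t$ (Lemma~\ref{lem:sub-Gaussian}), and close with Gr\"onwall.

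The one organisational difference is that the paper applies Lemma~\ref{lmm:changemeasure} to each summand $F_i:=\tfrac{1}{N-1}\big|\sum_{j\ne i}\psi^i(x_j)\big|^2$ \emph{separately} and only afterwards sums over $i$, rather than bundling everything into your single functional $\Phi$. This sidesteps exactly the correlation obstacle you flag: for fixed $i$, under $\bar\mu_t^{\otimes N}$ the variables $\psi^i(x_j)=b(x_i-x_j)-b*\bar\mu_t(x_i)$, $j\ne i$, are (conditionally on $x_i$) i.i.d.\ centred with sub-Gaussian norm bounded \emph{uniformly in $x_i$} by Lemma~\ref{lem:sub-Gaussian}(2), so Lemma~\ref{lem:LDP} applies directly and gives $\int e^{\eta F_i}\bar\mu_t^{\otimes N}\le C$. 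Summing the $N$ resulting inequalities against the prefactor $\tfrac{1}{N-1}$ yields $\mathcal J_N\le \epsilon I_t^N + C(H_t^N+1)$ with no U-statistic or Jabin--Wang combinatorics needed. Your bundled route also works (Jensen on the average over $i$ reduces $\int e^{\eta\Phi}$ to $\tfrac{1}{N}\sum_i\int e^{\eta F_i}$), but the per-$i$ argument is shorter and makes the ``main obstacle'' disappear.
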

\begin{proof}
    Recalling \eqref{eq:A-B}, note that $\mathcal{K}_N=0$. We only need to consider $\mathcal{J}_N$ in \eqref{eq:Jestimate1}. By H\"older's inequality, one has
    \begin{equation*}
        \mathcal{J}_N \le \epsilon I_t^N+ \sum\limits_{i=1}^N \int \mu_t^N \Big|\sum\limits_{j=1,\,j\ne i}^N \frac{1}{N-1}b(x_i - x_j) - b*\bar{\mu}_t(x_i)\Big|^2.
    \end{equation*}
    Define $\psi^i(x_j):= b(x_i-x_j)-b*\bar{\mu}_t(x_i).$ Then the second term of the above equality becomes
    \begin{equation*}
        \sum\limits_{i=1}^N \int \mu_t^N \Big|\sum\limits_{j=1,\,j\ne i}^N \frac{1}{N-1}b(x_i - x_j) - b*\bar{\mu}_t(x_i)\Big|^2 = \frac{1}{N-1}\sum\limits_{i=1}^N \int \mu_t^N \frac{1}{N-1}\Big|\sum\limits_{j=1,j\ne i}^N\psi^i(x_{j})\Big|^2,
    \end{equation*}
    which is finite due to the Lipschitz continuity of $b$ and Lemma \ref{lem:momentcontrol}.
    Then, applying Lemma \ref{lmm:changemeasure}, one has
    \begin{equation*}
        \int \mu_t^N \frac{1}{N-1}\Big|\sum\limits_{j=1,j\ne i}^N\psi^i(x_j)\Big|^2
        \le \frac{1}{\eta} \left( H_t^N + \log \int \exp\bigg\{ \frac{\eta}{N-1} \Big|\sum\limits_{j=1,j\ne i}^N\psi^i(x_j)\Big|^2 \bigg\} \bar{\mu}_t^{\otimes N}\right),
    \end{equation*}
    for any $\eta>0$. Note that 
    \begin{equation*}
        \int \psi^i(x_j)\bar{\mu}_t^{\otimes N} =0, \text{ for any } j\ne i.
    \end{equation*}
    Since the interaction kernel is globally Lipschitz, Lemma \ref{lem:sub-Gaussian} implies that the conditions in Lemma \ref{lem:LDP} hold for certain positive $\eta$ small enough.
    Then, by Lemma \ref{lem:LDP}, one has that, 
    \begin{equation*}
        \int \exp\bigg\{ \frac{\eta}{N-1} \Big|\sum\limits_{j=1,j\ne i}^N\psi^i(x_j)\Big|^2 \bigg\} \bar{\mu}_t^{\otimes N} \le C.
    \end{equation*}
    Hence, combining with the symmetry of the particle system, one obtains that
    \begin{equation*}
        \frac{d}{dt}H_t^N \le C H_t^N + C + C\e^{Ct}N\tau^2.
    \end{equation*}
    The Gr\"onwall inequality gives the result.
\end{proof}
Based on Lemma \ref{lem:HN}, as a direct corollary of linear scaling for relative entropy Lemma \ref{lem:linear}, we have
    \begin{equation}\label{eq:cor:Htk}
        H_t^k \le C\e^{Ct}\frac{k}{N} + C\e^{Ct}k\tau^2.
    \end{equation}

\subsection{Improvement of the result}\label{subsec: improve}

Now we improve the result by using the bounds of $H_t^k$, $1\le k\le N$.
By the similar analysis of Lemma \ref{lem:HN}, one has
\begin{equation}\label{eq:4.3.1}
    \int \mu_t^k \Big|\sum\limits_{j=1,\,j\ne i}^k b(x_i - x_j) - (k-1)b*\bar{\mu}_t(x_i)\Big|^2
    \lesssim k(H_t^k + C).
\end{equation}
Therefore, combining \eqref{eq:4.3.1} with \eqref{eq:cor:Htk}, one obtains
\begin{align*}
    \frac{1}{N^2}\sum\limits_{i=1}^k\int \mu_t^k \bigg|\sum\limits_{j=1,\,j\ne i}^k b(x_i - x_j)-(k-1)\bar{\mu}_t*b(x_i)\bigg|^2& \le C\e^{Ct}\frac{k^2}{N^2}(\frac{k}{N} + k\tau^2 +C)\\
    &\le C\e^{Ct}(\frac{k^2}{N^2}+ k\tau^2).
\end{align*}
Then \eqref{eq:dHtk2} derives
\begin{equation}\label{eq:dHtk3  0}
    \frac{d}{dt}H_t^k \le C \e^{Ct}\frac{k^2}{N^2}+ C \e^{Ct}k\tau^2 + C k (H_t^{k+1}-H_t^{k}).
\end{equation}
For noatational convenience, we rewrite \eqref{eq:dHtk3  0} into
\begin{equation}\label{eq:dHtk3}
    \frac{d}{dt}H_t^k \le C \e^{Ct}\frac{k^2}{N^2}+ C \e^{Ct}k\tau^2 + \gamma k (H_t^{k+1}-H_t^{k}),
\end{equation}
where $\gamma$ equals the constant $C$ in \eqref{eq:dHtk3  0}.
By Gr\"onwall's inequality,
\begin{equation*}
    H_t^k \le \e^{-\gamma k t}H_0^k + \int_0^t \e^{-\gamma k (t-s)}\left(C \e^{Ct}\frac{k^2}{N^2} + C \e^{Ct}k\tau^2 + \gamma k H_s^{k+1} \right)\,ds.
\end{equation*}
Note that $H_0^\ell = 0$ since the initial data are $i.i.d.$ Iterating this inequality $N-k$ times, one gets
\begin{equation}\label{eq:HtkAB}
    H_t^k \le \sum\limits_{\ell=k}^{N-1}  C \e^{Ct}(\frac{\ell}{N^2}+ \tau^2) A_k^\ell(t) + A_k^{N-1}(t)H_t^N,
\end{equation}
where $A_k^\ell$ is defined in Lemma \ref{lem:AB}.

By Lemma \ref{lem:AB}, one knows that 
\begin{equation*}
    \begin{aligned}
        A_k^{N-1}(t)\le & \exp\left\{ -2N \Big(\e^{-\gamma t} -\frac{k}{N}\Big)_+\right\}\\
    \le&\left\{ \begin{aligned}
        &\exp\left(- N \e^{\gamma t}\right), \quad &\text{if } k\le \frac{N\e^{-\gamma t}}{2},\\
    &1, &\text{otherwise}.
    \end{aligned}\right.
    \end{aligned}
\end{equation*}
For the first case of $k\le \frac{N\e^{-\gamma t}}{2}$, note that the elementary inequality
\begin{equation*}
    \e^{-a} \le \beta ! a^{-\beta},\quad \text{for } a>0 \text{ and } \beta \in \mathbb{Z}_+.
\end{equation*}
Taking $\beta =2$, one has
\begin{equation}\label{eq:ANcase1}
    \exp\left(- N \e^{\gamma t}\right) \le \left(N\e^{-\gamma t}\right)^{-2}2!\le \frac{C\e^{Ct}}{N^2}.
\end{equation}
For the second case, one has that
\begin{equation}\label{eq:ANcase2}
    k> \frac{N\e^{-\gamma t}}{2} \quad\Longrightarrow\quad 1< 4 \frac{k^2}{N^2}e^{2\gamma t}.
\end{equation}
Combining \eqref{eq:ANcase1} and \eqref{eq:ANcase2}, one obtains that, for any $k\ge 1$,
\begin{equation*}
    A_k^{N-1} (t)\le C\e^{Ct}\frac{k^2}{N^2}.
\end{equation*}
Therefore, by Lemma \ref{lem:HN}, it holds
\begin{equation*}
    A_k^{N-1} (t) H_t^N \le C\e^{Ct}\frac{k^2}{N^2}(C\e^{Ct} + C\e^{Ct}N\tau^2) \le C\e^{Ct}\frac{k^2}{N^2} + C\e^{Ct}k\tau^2.
\end{equation*}
The ODE hierarchies of $H_t^k$ now becomes
\begin{equation*}
    H_t^k \le \sum\limits_{\ell=k}^{N-1} C\e^{Ct}(\frac{\ell}{N^2}+ \tau^2) A_k^\ell(t) + A_k^{N-1}(t)H_t^N.
\end{equation*}
Using Lemma \ref{lem:AB} again, we have
\begin{equation*}
    H(\mu_t^k\,|\, \bar{\mu}_t^{\otimes k}) \le Ct\e^{Ct}\left(\frac{k^2}{N^2} + k\tau^2\right).
\end{equation*}
Now  the proof is finished.



\section{Auxiliary lemmas}\label{sec:auxlem}

In this section, we present the auxiliary lemmas applied in the main text. We begin by analyzing the solutions to the two-particle systems \eqref{eq:rbmalgo} and \eqref{eq:mckean} using the standard It\^o calculus. Then, in Section \ref{subsec:aux density}, we estimate the Fisher information of the density for the RBM particle system, which provides a crucial estimate for Proposition \ref{prop:5.1}. Sections \ref{subsec: aux ineq} and \ref{subsec: aux ode} contain technical lemmas used in the proof of Theorem \ref{thm:1}.

\subsection{Moment bounds}

Here, we establish coarse moment bounds for $\tilde{X}_i^\xi$ with $1\le i\le N$ and $q\ge 2$, which is essential for controlling the growth of the interaction term in the main proof. For the McKean SDE \eqref{eq:mckean}, Lemma \ref{lem:sub-Gaussian} shows that the solution is sub-Gaussian under the sub-Gaussian assumption of $\mu_0$,.

\begin{lemma}[moments control]\label{lem:momentcontrol}
    Suppose the assumptions of Theorem \ref{thm:1} holds. Then, for any $q\ge 2$, there exists a constant $C=C(d,\,q,\,\sigma,\,\|\nabla b\|_\infty,\,b_0)$ such that for any $i$,
    \begin{equation}\label{eq:momentcontrol}
         \E |\tilde{X}_i^{\xi}|^q \le C\e^{Ct}.
    \end{equation}
\end{lemma}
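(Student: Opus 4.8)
\textbf{Proof proposal for Lemma \ref{lem:momentcontrol}.}

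The plan is to derive a closed differential inequality for the $q$-th moment $m_q(t) := \E|\tilde{X}_i^\xi|^q$ using It\^o's formula and then close it with Gr\"onwall's inequality. First I would fix a batch realization $\xi$ and apply It\^o's formula to $|\tilde{X}_i^\xi|^q$ along the dynamics \eqref{eq:rbm1d}. This produces three contributions: the drift from $b_0$, the interaction drift $\frac{1}{p-1}\sum_{j\in\xi_n(i),\,j\ne i} b(\tilde{X}_i^\xi - \tilde{X}_j^\xi)$, and the generator term $\sigma\,\Delta|\tilde{X}_i^\xi|^q$ coming from the diffusion (the martingale part vanishes in expectation). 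The key observation is that, since $q$ is an even-power-like quantity, the derivative $\nabla|x|^q = q|x|^{q-2}x$, so the $b_0$ term becomes $q|\tilde{X}_i^\xi|^{q-2}\tilde{X}_i^\xi\cdot b_0(\tilde{X}_i^\xi)$, which I would control using the one-sided Lipschitz condition \eqref{eq:onesidedlip}. Concretely, writing $b_0(x)\cdot x = (b_0(x)-b_0(0))\cdot x + b_0(0)\cdot x \le L_0|x|^2 + |b_0(0)||x|$, so this term is bounded by $C(|\tilde{X}_i^\xi|^q + |\tilde{X}_i^\xi|^{q-1})$, which after Young's inequality is $\le C(1 + |\tilde{X}_i^\xi|^q)$.

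Second, I would handle the interaction term. The kernel $b$ is globally Lipschitz by Assumption \ref{ass:b0b1}, so $|b(\tilde{X}_i^\xi - \tilde{X}_j^\xi)| \le |b(0)| + \|\nabla b\|_\infty(|\tilde{X}_i^\xi| + |\tilde{X}_j^\xi|)$. Thus the interaction contribution to $\frac{d}{dt}\E|\tilde{X}_i^\xi|^q$ is bounded by
\begin{equation*}
q\,\E\Big[|\tilde{X}_i^\xi|^{q-1}\frac{1}{p-1}\sum_{\substack{j\in\xi_n(i)\\ j\ne i}}\big(|b(0)| + \|\nabla b\|_\infty(|\tilde{X}_i^\xi| + |\tilde{X}_j^\xi|)\big)\Big].
\end{equation*}
Young's inequality turns each term $|\tilde{X}_i^\xi|^{q-1}|\tilde{X}_j^\xi|$ into $C(|\tilde{X}_i^\xi|^q + |\tilde{X}_j^\xi|^q)$, and the diffusion term $\sigma q(q-2+d)|\tilde{X}_i^\xi|^{q-2}$ is likewise absorbed into $C(1+|\tilde{X}_i^\xi|^q)$ by Young. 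At this stage I would invoke the exchangeability of the particle system: for a fixed batch, the particles are not identically distributed conditionally, but after taking $\E_\xi$ over the random division (or summing over $i$ and dividing by $N$), the indices $j\ne i$ can be replaced by the common moment $m_q(t)$. Summing over $i$ and using symmetry gives a self-contained inequality $\frac{d}{dt}\,\frac{1}{N}\sum_i m_q^{(i)}(t) \le C\big(1 + \frac{1}{N}\sum_i m_q^{(i)}(t)\big)$, and by symmetry each $m_q^{(i)} = m_q$, so $\frac{d}{dt}m_q(t) \le C(1 + m_q(t))$.

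Finally, Gr\"onwall's inequality yields $m_q(t) \le (m_q(0)+1)\e^{Ct} - 1 \le C\e^{Ct}$, where the initial moment $m_q(0) = \E|X_i^0|^q$ is finite because $\mu_0$ is sub-Gaussian by Assumption \ref{ass:ini}, giving finite moments of all orders. The main obstacle I anticipate is justifying the replacement of the batchmates' moments by the common moment $m_q$ rigorously, since for a \emph{fixed} batch division the particles within a batch are correlated and the conditional laws differ; the clean way around this is to take the full expectation $\E = \E_\xi\E$ from the outset, exploit that the batch division $\xi_n$ is independent of the particle positions at $t_n$, and use the symmetry of the construction (so that $\E|\tilde{X}_j^\xi|^q = m_q$ for every $j$ regardless of batch membership). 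A secondary technical point is ensuring the It\^o formula and the interchange of expectation and time-derivative are valid despite the possible superlinear growth of $b_0$; this is standard once one works with a stopping-time localization $\tau_R = \inf\{t: |\tilde{X}_i^\xi| \ge R\}$ and passes to the limit $R\to\infty$ via Fatou's lemma, using that the one-sided Lipschitz bound prevents finite-time blow-up.
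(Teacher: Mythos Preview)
Your proposal is correct and matches the paper's approach: the paper also applies It\^o's formula to $|\tilde{X}_i^\xi|^q$, conditions on $\mathcal{F}_n$, takes full expectation to exploit the independence of the batch division and symmetry of the particles, and closes with Gr\"onwall's inequality, arriving at exactly the differential inequality $\frac{d}{dt}\E|\tilde{X}_i^\xi|^q \le Cq\,\E|\tilde{X}_i^\xi|^q + C$. The paper is terse and defers the details to \cite[Lemma~3.3]{jin2020random}; your write-up simply fills in the steps (one-sided Lipschitz for $b_0$, Lipschitz growth for $b$, Young's inequality, and the stopping-time localization) that the paper leaves implicit.
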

\begin{proof}
We can use a similar method as in \cite{jin2020random} to prove Lemma \eqref{eq:momentcontrol}. Under the assumptions of Theroem \ref{thm:1}, it's straightforward to compute the differentiation of $\E(|\tilde{X}_i^\xi|^q\mid \mathcal{F}_n)$ by It\^o's calculus and derive
\begin{equation*}
        \frac{d}{dt} \E [|\tilde{X}_i^{\xi}|^q] \le C q \E [|\tilde{X}_i^{\xi}|^q] + C,
\end{equation*}
by taking expectation about the randomness in $\mathcal{F}_n$. The Gr\"onwall's inequality leads to the result \eqref{eq:momentcontrol}. We omit the details but refer to \cite[Lemma 3.3]{jin2020random}.
\end{proof}

Another crucial observation is that the distribution $\bar{\mu}_t$ remains sub-Gaussian distribution for all $t\ge 0$.
\begin{lemma}\label{lem:sub-Gaussian}
    Under the assumptions of Theorem \ref{thm:1}, the following statements hold.
    \begin{enumerate}
        \item For any $t \in [0,T]$, the solution of the mean field McKean SDE \eqref{eq:mckean} is sub-Gaussian. 
        \item The interaction kernel $b(\cdot)$ and the marginal distribution $\bar{\mu}_t$ of the Fokker-Planck equation \eqref{eq:FP} satisfy: there exist $C>0$ such that $\forall x,y \in \mathbb{R}^d$ and $t \in [0,T]$,
\[|b(x-y) - b {*} \bar{\mu}_t(x)| \leq C(1 + |y|).
\]
    \end{enumerate}
\end{lemma}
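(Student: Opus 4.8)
The two claims rest on a single preliminary estimate, namely a uniform first-moment bound for the McKean process on $[0,T]$, so I would establish that first. Applying It\^o's formula to $|\bar{X}_1|^2$ and taking expectations, the diffusion contributes a constant $2\sigma d$, the drift $b_0$ is controlled by the one-sided Lipschitz condition \eqref{eq:onesidedlip} with $y=0$ (giving $x\cdot b_0(x)\le L_0|x|^2 + |x|\,|b_0(0)|$), and the interaction term is controlled by the global Lipschitz bound on $b$: writing $L_b:=\|\nabla b\|_\infty$ one has $|b*\bar{\mu}_t(x)|\le |b(0)| + L_b|x| + L_b\,\E|\bar{X}_1|$. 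Setting $V(t):=\E|\bar{X}_1(t)|^2$ and using $\E|\bar{X}_1|\le V(t)^{1/2}$, this closes into a differential inequality $V'(t)\le C(V(t)+1)$, so Gr\"onwall yields $\sup_{t\le T}V(t)\le C$ and hence $m_t:=\E|\bar{X}_1(t)|\le C$ uniformly on $[0,T]$.

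Part 2 is then immediate and does not require sub-Gaussianity. Since $b*\bar{\mu}_t(x)=\int b(x-z)\,\bar{\mu}_t(dz)$, I would write the difference as an average over $\bar{\mu}_t$ and telescope:
\[
b(x-y)-b*\bar{\mu}_t(x)=\int \big(b(x-y)-b(x-z)\big)\,\bar{\mu}_t(dz),
\]
so that the $x$-dependence cancels and the Lipschitz bound gives $|b(x-y)-b*\bar{\mu}_t(x)|\le L_b\int|y-z|\,\bar{\mu}_t(dz)\le L_b|y| + L_b m_t\le C(1+|y|)$, uniformly in $x$ and in $t\le T$ by the first-moment bound.

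For part 1, I would upgrade the moment bound to a uniform exponential-square moment by applying It\^o to $\e^{\alpha(t)|\bar{X}_1|^2}$ with a carefully decaying exponent $\alpha(t)$. Writing $\beta:=b_0+b*\bar{\mu}_t$, the preceding estimates give the one-sided bound $x\cdot\beta(x)\le L|x|^2 + K$ on $[0,T]$ with $L:=L_0+L_b+1$ and $K$ depending on $m_t$. The It\^o computation produces, besides a harmless $C\,\E[\e^{\alpha|\bar{X}_1|^2}]$ term, a feedback term proportional to $\big(\alpha'(t)+2L\alpha(t)+4\sigma\alpha(t)^2\big)\,\E[|\bar{X}_1|^2\e^{\alpha|\bar{X}_1|^2}]$; choosing $\alpha$ to solve the Bernoulli ODE $\alpha'=-2L\alpha-4\sigma\alpha^2$ (equivalently $u:=1/\alpha$ solves the linear $u'=2Lu+4\sigma$) kills this term. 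Since $u$ stays finite on $[0,T]$, $\alpha(t)$ remains bounded below by some $\alpha_*>0$, and starting from $\alpha(0)$ small enough that $\E[\e^{\alpha(0)|\bar{X}_1(0)|^2}]<\infty$ (guaranteed by the sub-Gaussian Assumption \ref{ass:ini}), Gr\"onwall gives $\sup_{t\le T}\E[\e^{\alpha_*|\bar{X}_1|^2}]<\infty$, which is exactly sub-Gaussianity.

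The main obstacle is the quadratic feedback term $\E[|\bar{X}_1|^2\e^{\alpha|\bar{X}_1|^2}]$ in the exponential-moment computation: with a constant exponent it cannot be absorbed when $L_0+L_b>0$, which is why the time-dependent $\alpha(t)$ solving the Riccati/Bernoulli ODE is essential. A secondary technical point is justifying that the stochastic integral is a genuine martingale, so that its expectation vanishes; I would handle this by a standard localization with stopping times $\tau_R=\inf\{t:|\bar{X}_1|\ge R\}$ followed by monotone convergence as $R\to\infty$, the uniform bounds legitimizing the passage to the limit.
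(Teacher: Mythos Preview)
Your proposal is correct and follows the same approach the paper sketches: the paper's proof simply says ``calculate $\E\exp(c|\bar X|^2)$ via It\^o's formula'' for part~1 and ``obvious by the first-order moment bound for $\bar X(t)$'' for part~2, which is exactly what you carry out. Your careful use of a time-dependent exponent $\alpha(t)$ solving the Bernoulli ODE is the right way to absorb the $|x|^2\e^{\alpha|x|^2}$ feedback when $L_0+L_b>0$; the paper's bare ``$c$'' hides this detail.
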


\begin{proof}
    The first claim can be verified by calculating $\E \,\exp(c|\bar{X}|^2)$ via It\^o's formula. Note that $b*\bar{\mu}_t$ is uniformly Lipschitz for any $\bar{\mu}_t$. The second one is actually also obvious by the first-order moment bound for $\bar{X}(t)$.
\end{proof}

\subsection{Estimates of the density for the RBM system}\label{subsec:aux density}
To prove Proposition \ref{prop:5.1}, we start from the estimate of the generalized Fisher information in Lemma \ref{lem:5.1}, which is the adaption of \cite[Lemma 5.1]{du2024collision} to the random batch system.

\begin{lemma}\label{lem:5.1}
For any $i\in\{1,\cdots,N\}$, let $\xi_n$ be the random batch division at $t_n$. Under the assumptions of Theorem \ref{thm:1}, for $q\ge 2,$ define
\begin{equation*}
    \hat{I}_q^i (t) : = \int_{\mathbb{R}^{Nd}}|\nabla_{x_i}\log \mu_t^{N,\xi} |^q \mu_t^{N,\xi} dx.
\end{equation*}
Then for $t\in[t_n,t_{n+1}),$ it holds that
\begin{equation}\label{eq:lemlem5.1}
\sum\limits_{j\in\xi_n(i)}\hat{I}_q^j(t) \le \e^{C\left(t-t_n\right)}\sum\limits_{j\in\xi_n(i)}\hat{I}_q^j(t_n)+Cp\left(t-t_n\right),
\end{equation}
where $C=C(d,\,q,\,\sigma,\,\|\nabla b\|_\infty,\,\|\nabla^2 b\|_\infty)$. 
\end{lemma}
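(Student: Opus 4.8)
The plan is to treat \eqref{eq:lemlem5.1} as a propagation estimate for a weighted ($q$-th order) Fisher information along the flow of the RBM Liouville equation \eqref{eq:rbmLiouville}. Write $\rho := \mu_t^{N,\xi}$ and let $s_j := \nabla_{x_j}\log\rho$ denote the per-particle score, so that $\hat I_q^j(t) = \int |s_j|^q\rho$. The decisive structural observation is that the $j$-th drift component $b^{(j)}(x) = b_0(x_j) + \frac{1}{p-1}\sum_{\ell\in\xi_n(j),\,\ell\ne j}b(x_j-x_\ell)$ depends only on $x_j$ and the coordinates of its batchmates; consequently, when one differentiates $\hat I_q^j$ in time, the resulting coupling terms involve only scores $s_\ell$ with $\ell$ in the same batch as $j$. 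This is precisely why the statement is phrased for the batch-summed quantity $S(t) := \sum_{j\in\xi_n(i)}\hat I_q^j(t)$: summing over $\xi_n(i)$ should produce a closed differential inequality of the form $\frac{d}{dt}S(t)\le C\,S(t) + Cp$, from which \eqref{eq:lemlem5.1} follows by Gr\"onwall's inequality, using that $t-t_n$ ranges over a bounded interval to linearize $\e^{C(t-t_n)}-1\lesssim t-t_n$ and thereby convert the additive constant into the $Cp(t-t_n)$ term.

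To obtain the differential inequality I would first derive the evolution of $w := \log\rho$ from \eqref{eq:rbmLiouville}, namely
\[
\partial_t w = -\nabla\cdot \vec b - \vec b\cdot\nabla w + \sigma\Delta w + \sigma|\nabla w|^2,
\]
where $\vec b = (b^{(1)},\dots,b^{(N)})$, and then compute $\frac{d}{dt}\int|s_j|^q\rho$ by combining $\partial_t\rho$ with $\partial_t s_j = \nabla_{x_j}\partial_t w$ and integrating by parts. The second-order term contributes a dissipative piece, a nonnegative $|s_j|^{q-2}$-weighted $L^2$ norm of $\nabla s_j$, which carries the right sign and can simply be discarded; here one must check that the weight $|s_j|^{q-2}$ does not spoil positivity for general $q\ge 2$. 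The remaining contributions come from the drift and split into a $b_0$-part and an interaction part.

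The $b_0$-part is the most delicate point, because Assumption \ref{ass:b0b1} grants only a one-sided Lipschitz bound \eqref{eq:onesidedlip} and permits superlinear growth, so $b_0$ may fail to be twice differentiable with bounded Hessian. The integration by parts must therefore be organized so that $b_0$ enters only through (the symmetric part of) its Jacobian contracted against $s_j$, i.e.\ through a quantity controlled by $L_0|s_j|^2$ via \eqref{eq:onesidedlip}, and never through $\nabla^2 b_0$; this yields a contribution bounded by $C\hat I_q^j$. The interaction part, by contrast, may be handled crudely using $\|\nabla b\|_\infty$ and $\|\nabla^2 b\|_\infty$: the terms carrying a score factor produce mixed products such as $|s_j|^{q-1}|s_\ell|$ with $\ell\in\xi_n(j)$, which Young's inequality splits into $|s_j|^q$ and $|s_\ell|^q$ pieces (both absorbed into $S(t)$ after summation), while the terms without a score factor, coming from $\nabla\cdot\vec b$ and from the bounded second derivatives of $b$, contribute an $O(1)$ source for each fixed $j$.

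Summing these estimates over the $p$ indices $j\in\xi_n(i)$ collects the $O(1)$ sources into the $Cp$ term and closes the coupled system into $\frac{d}{dt}S\le CS + Cp$, giving \eqref{eq:lemlem5.1}. I expect the main obstacle to be twofold: arranging the $b_0$ integration by parts to exploit one-sided Lipschitzness without ever differentiating $b_0$ twice, and carefully bookkeeping the cross-batch score couplings (together with the $|s|^{q-2}$-weighted terms for general $q$) so that, after Young's inequality, every surviving score term is genuinely of the form $|s_\ell|^q$ with $\ell\in\xi_n(i)$ and hence reabsorbed into $S(t)$.
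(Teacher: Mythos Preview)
Your plan matches the paper's proof: both derive a closed differential inequality $\frac{d}{dt}S(t)\le CS(t)+Cp$ for the batch-summed $q$-Fisher information (the paper packages this via $u^{k}=\e^{h}\sum_{\alpha\in\mathcal I_k}|h_\alpha|^{q}$ and computes $u^{k}_t-\mathcal L u^{k}$, but the content is the same dissipation/drift splitting you describe) and conclude via Gr\"onwall. One point of divergence worth flagging: the paper's drift vector $\tilde b(x_k)$ contains \emph{only} the interaction term, with $b_0$ silently dropped, and correspondingly the stated constant $C$ carries no $b_0$-dependence; so your careful attempt to exploit the one-sided Lipschitz condition on $b_0$ and avoid $\nabla^2 b_0$ addresses an issue the paper's proof does not actually confront---whether that omission is a genuine gap or a tacit extra regularity assumption on $b_0$ is not resolved by the text.
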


\begin{proof}
During the interval $t \in(t_n, t_{n+1}]$, the $N$ particles are divided into $N / p$ groups. The particles of any batch $\xi_n(i)$ interact with each other without affecting other particles. Hence, we can order the particles to be $\{\mathfrak{S}(1), \mathfrak{S}(2), \cdots, \mathfrak{S}(N)\}$ such that $\{\mathfrak{S}(pk-p+1), \cdots, \mathfrak{S}(pk)\}$ are in the same group.
For notional simplicity, we set $b(0)=0$ formally.
Introduce
\begin{equation*}
    x_k := 
    \begin{bmatrix}
        x_{\mathfrak{S}(pk-p+1)} \\
        \vdots \\
        x_{\mathfrak{S}(pk)}
    \end{bmatrix} \in \mathbb{R}^{p d},
\end{equation*}
and
\begin{equation*}
    \tilde{a}(x_k) := \sigma \, I_{pd \times pd}, \quad 
    \tilde{b}(x_k) := -\frac{1}{p-1}
    \begin{bmatrix}
        \sum\limits_{\ell \in \xi_n(\mathfrak{S}(pk-p+1))} b\left(x_{\mathfrak{S}(pk-p+1)} - x_\ell\right) \\
        \vdots \\
        \sum\limits_{\ell \in \xi_n(\mathfrak{S}(pk))} b\left(x_{\mathfrak{S}(pk)} - x_\ell\right)
    \end{bmatrix} \in \mathbb{R}^{p d}.
\end{equation*}

In addition, we define
\begin{equation*}
\begin{aligned}
& \mathcal{I}_k=\{\mathfrak{S}(pk-p+1),\cdots,\mathfrak{S}(pk)\}, \\
& h:=\log f, \quad f:=\mu_t^{N, \xi}, \\
& u^{k}:=\e^{h} \sum_{\alpha \in \mathcal{I}_{k}} \varphi(h_\alpha), \quad \varphi(x):=|x|^q.
\end{aligned}
\end{equation*}
The subsrcipts are defined as
\begin{equation*}
    \cdot _k := \nabla_{x_k}(\cdot),\quad \cdot_{,k}: =\nabla_{x_k}\cdot(\cdot), \quad
    \cdot_{kk}:=\nabla^2_{x_k}(\cdot),\quad \cdot_{,kk}:= \nabla^2_{x_k}:(\cdot),
\end{equation*}
where $\nabla^2_{x_k}$ denotes the Hessian matrix with respect to $x_k$. Furthermore, we denote
\begin{equation*}
    \tilde{a}^{k}:=\tilde{a}(x_k)=\sigma I_{pd\times pd}, \quad \tilde{a}^{k}:=\tilde{a}(x_k)=\sigma I_{pd\times pd}, \quad \tilde{a}_{,\,kk}^{k}:=\nabla_{x_k}^{2}: \tilde{a}(x_k)=0,
\end{equation*}
and
\begin{equation*}
    a^{k}:=\tilde{a}^{k}, \quad b^{k}:=\tilde{b}^{k}+\tilde{a}_{,\,k}^{k}=\tilde{b}^{k}, \quad c^{k}:=\tilde{b}_{,\,k}^{k}+\tilde{a}_{,\,kk}^{k}=\tilde{b}_{,\,k}^{k}.
\end{equation*}
Using the new notations, we have
\begin{align}
u_{t}^{k}&-\sum_{i=1}^{N/p}\left(\left(\tilde{a}^{i} u^{k}\right)_{,\,ii}+\left(\tilde{b}^{i} u^k\right)_{,\,i}\right)
=u_{t}^{k}-\sum_{i=1}^{N/p}\left(a^i: u_{i i}^{k}+b^i \cdot u_i^{k}+c^{i} u^{k}\right), \label{eq:Eu}\\
 =&\sum_{\alpha \in \mathcal{I}_k} \varphi(h_\alpha)\underbrace{\left(\partial_{t}\e^{h}-\sum_{i=1}^{N/p} a^i:\left(\e^{h}\right)_{i i}-\sum_{i=1}^{N/p} b^i\cdot\left(\e^{h}\right)_i-\sum_{i=1}^{N/p} c^{i} \e^{h}\right)}_{=:Eu1} \notag\\
& +\sum_{\alpha \in \mathcal{I}_{k}} \nabla \varphi(h_\alpha) \cdot \underbrace{\left(\partial_t h_\alpha-2 \sum_{i=1}^{N/p} a^i: (h_i \otimes h_{\alpha i})-\sum_{i=1}^{N/p} a^i h_{\alpha ii}-\sum_{i=1}^{N/p} b^i \cdot h_{\alpha i}\right) e^h}_{=:Eu2} \notag\\
& -\sum_{\alpha \in\mathcal{I}_k}\nabla^2 \varphi(h_\alpha): \left(\sum_{i=1}^{N/p} a^i: h_{i \alpha} \otimes h_{i \alpha}\right) \e^{h}. \notag
\end{align}
Here, the double dot product ``:" means the Frobenius inner product (for matrices) or a double tensor contraction (for higher-order tensors).
By the equation of $h$, it's clear that
$$ E_{u1}=0, $$
and
\begin{align}
 E_{u2}=&\underbrace{\left(\partial_{t} h-\sum_{i=1}^{N/p} a^i:\left(h_{ii}+h_i \otimes h_i\right)-\sum_{i=1}^{N/p} b^i h_i-\sum_{i=1}^{N/p} c^{i}\right)_\alpha}_{=0}  \notag\\
& +\sum_{i=1}^{N/p} a_\alpha^i: h_{ii}+\sum_{i=1}^{N/p} a_{2}^{i}: h_i \otimes h_i+\sum_{i=1}^{N/p} b_\alpha^i h_i+c_\alpha^{i} \notag\\
 =&\left(a_\alpha^k:\left(h_{kk}+h_k \otimes h_k\right)+b_\alpha^{k} \cdot h_k+c_\alpha^k\right) e^h \notag.
\end{align}
Then, \eqref{eq:Eu} becomes
\begin{align}
\sum_{\alpha \in \mathcal{I}_k}& \nabla \varphi(h_\alpha)\left(a_\alpha^{k}:\left(h_{kk}+h_k \otimes h_k\right)+b_\alpha^{k} \cdot h_k+c_\alpha^k\right) \e^{h} \notag\\
&-\sum_{\alpha\in\mathcal{I}_k} \nabla^{2} \varphi(h_\alpha):\left(\sum_{i=1}^{N/p} a^i:h_{\alpha i} \otimes h_{\alpha i}\right) \e^{h}\notag\\
=&\,q \e^{h} \sum_{\alpha \in\mathcal{I}_k} |h_\alpha|^{q-2}h_\alpha \cdot\left(b_\alpha^{k} \cdot h_k+c_\alpha\right)\notag\\
&-q \e^{h} \sum_{\alpha \in\mathcal{I}_k} |h_\alpha|^{q-2} \sum_{i=1}^{N/p} a^i :_i\left[\Big(I+(q-2)\frac{h_\alpha\otimes h_\alpha}{|h_\alpha|^2}:_\alpha\left(h_{\alpha i} \otimes h_{\alpha i}\right)\right] .\label{eq:Eu3}
\end{align}
The final term is a useful term that gives dissipation. In fact, one has
\begin{equation*}
    \begin{aligned}
        a^i :_i\left(I+(q-2)\frac{h_\alpha\otimes h_\alpha}{|h_\alpha|^2}:_\alpha\left(h_{\alpha i} \otimes h_{\alpha i}\right)\right)&\ge \sum\limits_\lambda \lambda a^i : \left[\big(e_\lambda \cdot \nabla_\alpha h_i\big)\otimes \big(e_\lambda \cdot \nabla_\alpha h_i\big)\right]\\
        &\ge \sum\limits_\lambda a^i : \left[\big(e_\lambda \cdot \nabla_\alpha h_i\big)\otimes \big(e_\lambda \cdot \nabla_\alpha h_i\big)\right]\\
        &\ge \sigma \sum\limits_{\beta \in \mathcal{I}_k}|h_{\alpha\beta}|^2,
    \end{aligned}
\end{equation*}
where $(\lambda, e_\lambda)$ are the eigenpairs of the matrix $I+(q-2)\frac{h_\alpha\otimes h_\alpha}{|h_\alpha|^2}$ and $|A|:=\sqrt{\sum\limits_{i,j}A_{ij}^2}$ is the Frobenius norm of the matrix $A$. Therefore, \eqref{eq:Eu3} gives
\begin{align}
     u_{t}^{k}&-\sum_{i=1}^{N/p}\left(\left(\tilde{a}^{i} u^{k}\right)_{,\,ii}+\left(\tilde{b}^{i} u^k\right)_{,\,i}\right)
     \le  q \e^{h} \sum_{\alpha \in\mathcal{I}_k} |h_\alpha|^{q-2} h_\alpha \cdot\left(b_\alpha^{k} \cdot h_k+c_\alpha\right)  \notag\\
    \le & \frac{C}{p-1}q\e^{h} \sum_{\alpha \in\mathcal{I}_k}|h_\alpha|^{q-1} \sum_{\beta \in\mathcal{I}_k}\left(\left|h_{\beta}\right|+1\right),\label{eq:Euend}
\end{align}
where the constant $C$ is related to $\sigma,\,\|\nabla b \|_{\infty},\, \|\nabla^2 b\|_\infty$. 
Integrating \eqref{eq:Euend} over the space, it holds that
$$
\frac{d}{d t} \int u_t^k \le  C \int \e^{h} \sum_{\alpha \in\mathcal{I}_k}\left(|h_\alpha|^{2}+1\right),
$$
since
$$
\sum_{\alpha,\beta \in\mathcal{I}_k}|h_\alpha|\left|h_{\beta}\right| 
\le \frac{1}{2} \sum_{\alpha,\beta \in\mathcal{I}_k}\left(|h_\alpha|^{2}+\left|h_{\beta}\right|^{2}\right) 
= p\sum\limits_{\alpha\in\mathcal{I}_k}|h_\alpha|^{2}.
$$
Then one has,
\begin{equation*}
    \frac{d}{dt}\sum\limits_{j\in\xi_n(i)}\hat{I}_q^i(t) \le C \sum\limits_{j\in\xi_n(i)}\hat{I}_q^i(t) + Cp,
\end{equation*}
where $C =C(d,\,q,\,\sigma,\,\|\nabla b \|_{\infty},\, \|\nabla^2 b\|_\infty)$.
By  Gr\"onwall's inequality, one has
\begin{equation*}
    \sum\limits_{j\in\xi_n(i)}\hat{I}_q^i(t) \le \e^{C(t-t_n)}\sum\limits_{j\in\xi_n(i)}\hat{I}_q^i(t_n) + \e^{C(t-t_n)}p(\e^{C(t-t_n)}-1). 
\end{equation*}
Since $t\in [t_n,t_{n+1})$ and $\tau \ll 1$, \eqref{eq:lemlem5.1} holds.
\end{proof}

As a direct consequence of the above results, we have the following lemma. 
\begin{lemma}\label{lem:Cor5.1}
For any $i\in\{1,\cdots,N\}$, let $\xi_n$ be the random batch division at $t_n$. Under the assumption of Lemma \ref{lem:5.1}, for any $t\in[t_n,t_{n+1}),$ it holds
\begin{equation}\label{eq:lemCor5.1}
\sum_{i=1}^{N} \int\left|\nabla_{x_i} \log \mu_{t,n}^{N,\xi_n}\right|^q \mu_{t,n}^{N,\xi_n} \le C  \e^{Ct} N,
\end{equation}
for any $q\ge 2$, with $C=C(d,\,q,\,\sigma,\,\|\nabla b\|_\infty,\,\|\nabla^2 b\|_\infty)$ independent of $N$ and $\xi$.
\end{lemma}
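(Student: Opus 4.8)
The plan is to transfer the group-wise Fisher information estimate of Lemma \ref{lem:5.1} from the fully path-conditioned law $\mu_t^{N,\xi}$ to the partially-averaged law $\mu_{t,n}^{N,\xi_n}$, and then to close a discrete Gr\"onwall recursion over the grid $t_0,t_1,\dots$. The starting observation is that, on $[t_n,t_{n+1})$, the conditional law $\mu_{t,n}^{N,\xi_n}$ solves \emph{exactly} the same Liouville equation \eqref{eq:rbmLmutn} as $\mu_t^{N,\xi}$ with the batch $\xi_n$ frozen; only the initial datum differs, being $\mu_{t_n}^N$ rather than $\mu_{t_n}^{N,\xi}$. Since the differential inequality derived in the proof of Lemma \ref{lem:5.1} depends only on the generator (i.e.\ on $\xi_n$, $\sigma$, $\|\nabla b\|_\infty$, $\|\nabla^2 b\|_\infty$) and not on the initial datum, the identical bound holds with $\tilde I_q^j(t):=\int|\nabla_{x_j}\log\mu_{t,n}^{N,\xi_n}|^q\,\mu_{t,n}^{N,\xi_n}$ in place of $\hat I_q^j(t)$.

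Next I would sum the per-batch estimate over the $N/p$ disjoint groups of $\xi_n$. Writing $G_q(t_n):=\sum_{i=1}^N\int|\nabla_{x_i}\log\mu_{t_n}^N|^q\,\mu_{t_n}^N$ and using $\mu_{t_n,n}^{N,\xi_n}=\mu_{t_n}^N$, the estimate $\sum_{j\in\xi_n(i)}\tilde I_q^j(t)\le \e^{C(t-t_n)}\sum_{j\in\xi_n(i)}\tilde I_q^j(t_n)+Cp(t-t_n)$, summed over the $N/p$ groups (each particle counted once), gives
\begin{equation*}
\sum_{i=1}^N\tilde I_q^i(t)\le \e^{C(t-t_n)}G_q(t_n)+CN(t-t_n),\qquad t\in[t_n,t_{n+1}),
\end{equation*}
since the additive terms accumulate to $(N/p)\cdot Cp(t-t_n)=CN(t-t_n)$, crucially independent of the realization of $\xi_n$. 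Thus the whole problem reduces to bounding the genuine batch-averaged Fisher information $G_q(t_n)$ at the grid points.

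To do this I would iterate in $n$. Letting $t\uparrow t_{n+1}$ above gives $\sum_i\tilde I_q^i(t_{n+1}^-)\le \e^{C\tau}G_q(t_n)+CN\tau$ for each fixed $\xi_n$, with a right-hand side independent of $\xi_n$. Since $\mu_{t_{n+1}}^N=\E_{\xi_n}\mu_{t_{n+1},n}^{N,\xi_n}$ and the functional $\mu\mapsto\int|\nabla\mu|^q/\mu^{q-1}$ is convex (it is the integral of the jointly convex perspective $(a,b)\mapsto|b|^q/a^{q-1}$ of the convex map $b\mapsto|b|^q$), Jensen's inequality yields $G_q(t_{n+1})\le \E_{\xi_n}\sum_i\tilde I_q^i(t_{n+1}^-)\le \e^{C\tau}G_q(t_n)+CN\tau$. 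Iterating this discrete Gr\"onwall inequality from $G_q(0)=N\int|\nabla\log\mu_0|^q\mu_0\le CN$ gives $G_q(t_n)\le C\e^{Ct_n}N$, and substituting back into the displayed estimate produces $\sum_i\tilde I_q^i(t)\le C\e^{Ct}N$ for all $t\in[t_n,t_{n+1})$, which is \eqref{eq:lemCor5.1}.

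The main obstacle I anticipate is the passage from the frozen-batch law back to the true marginal $\mu_{t_{n+1}}^N$: Lemma \ref{lem:5.1} lives at the level of a fixed $\xi_n$, whereas $G_q$ is defined for the average over $\xi_n$. The convexity of the $q$-Fisher-information functional is exactly what makes this averaging lossless and lets the recursion close with a constant independent of $N$ and $\xi$; verifying that convexity via the perspective-function viewpoint is the one genuinely non-mechanical ingredient. A minor point to handle is the finiteness of the initial $q$-Fisher information $\int|\nabla\log\mu_0|^q\mu_0$, which must be granted from the regularity hypotheses on $\mu_0$ (it enters only through the absorbing constant $C$ and is automatic for the i.i.d.\ initialization).
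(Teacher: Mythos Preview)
Your proof is correct and uses the same two ingredients as the paper: summing the batch-wise estimate of Lemma \ref{lem:5.1} over all $N/p$ groups to get a global inequality with additive term $CN(t-t_n)$, and exploiting the joint convexity of $(g,g')\mapsto|g'|^q/g^{q-1}$ (the perspective of $|\cdot|^q$) to pass between conditioned and averaged laws via Jensen. The only difference is the order in which these are applied. The paper iterates Lemma \ref{lem:5.1} directly for the fully path-conditioned law $\mu_t^{N,\xi}$ across all intervals $[t_0,t_1),[t_1,t_2),\dots$ (which is unproblematic because $\mu_{t_m}^{N,\xi}$ is defined at every grid point), obtaining $\sum_i\hat I_q^i(t)\le C\e^{Ct}N$ uniformly in $\xi$, and then applies convexity \emph{once} at the end via $\mu_{t,n}^{N,\xi_n}=\E[\mu_t^{N,\xi}\mid\xi_n]$. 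Your route instead applies the differential inequality to $\mu_{t,n}^{N,\xi_n}$ on a single interval and invokes convexity at \emph{every} grid point, through $\mu_{t_{n+1}}^N=\E_{\xi_n}\mu_{t_{n+1},n}^{N,\xi_n}$, to close a discrete Gr\"onwall recursion on $G_q(t_n)$. Both orderings give the same bound; the paper's is marginally cleaner since Jensen is used only once, while yours makes more explicit that the right-hand side at each step is independent of the realization of $\xi_n$. Your caveat about the finiteness of $\int|\nabla\log\mu_0|^q\mu_0$ for $q>2$ is well observed; the paper tacitly assumes it as well.
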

\begin{proof}
By Lemma \ref{lem:5.1}, taking the summation over $i$, one obtains
\begin{align*}
    \sum_{i=1}^{N} \int\left|\nabla_{x_i} \log \mu_{t,n}^{N,\xi_n}(t)\right|^q \mu_{t,n}^{N,\xi_n}(t) dx 
    \le& C p\frac{N}{p}\left(t-t_n\right)\\
    &+\e^{C\left(t-t_n\right)} \sum_{i=1}^{N} \int\left|\nabla_{x_i} \log \mu_{t,n}^{N,\xi_n}\left(t_n\right)\right|^q \mu_{t,n}^{N,\xi_n}\left(t_n\right) dx.
\end{align*}
Iterating the above equation gives
\begin{equation*}
    \begin{aligned}
& \sum_{i=1}^{N} \int\left|\nabla_{x_i} \log \mu_{t,n}^{N,\xi_n}(t)\right|^q \mu_{t,n}^{N,\xi_n}(t) \\
& \le \e^{Ct} \sum_{i=1}^{N} \int \left|\nabla_{x_i} \log \mu_0\right|^q \mu_0 d x+C N\left(t-t_n+\sum_{j=1}^n \e^{C\left(t-t_{j}\right)} \tau\right) \\
& \le C \e^{Ct} N.
\end{aligned}
\end{equation*}
Note that
\begin{equation*}
    \int\left|\nabla_{x_i} \log g\right|^q g=\int \frac{\left|\nabla_{x_i} g\right|^q}{g}.
\end{equation*}
Since the mapping $\left(g_1, g_2\right) \longmapsto \frac{\left|g_2\right|^q}{g_1}$ is convex, and
$$
\mu_{t,n}^{N, \xi_n}=\mathbb{E}\left[\mu_{t}^{N, \xi} \mid \xi_n\right],
$$
one has that
$$
\sum_{i=1}^{N} \int\left|\nabla_{x_i} \log \mu_{t,n}^{N \cdot \xi_n}\right|^q \mu_{t,n}^{N, \xi_n} \le C t \e^{Ct} N.
$$
\end{proof}

\subsection{Auxiliary inequalities}\label{subsec: aux ineq}
Here, we first present the weighted Csisz\'ar-Kullback-Pinsker inequality used in Section \ref{subsec:derivation}. Then we show three auxiliary lemmas used in Section \ref{subsec:boundHtN}: a Fenchel-Young-type inequality, the Large Derivation Theorem and the linear scaling for relative entropy.

Lemma \ref{lem:CKP} is a generalization of  Pinsker's inequality. This famous result belongs to Villani, with the complete proof appearing in \cite[Theorem 2.1]{villani2009optimal}. 

\begin{lemma}[\cite{villani2009optimal}, Theorem 2.1]\label{lem:CKP}
    Let $E$ be a measurable space, let $\mu, \nu$ be two probability measures on $E$, and let $\varphi$ be a nonnegative measurable function on $E$. Then it holds
    $$\|\varphi(\mu-\nu)\|_{TV} \leq \sqrt{2}\left(1+\log \int e^{\varphi(x)^2} d \nu(x)\right)^{1 / 2} \sqrt{H(\mu \mid \nu)},$$
    where the notation $\varphi(\mu-\nu)$ is a shorthand for the signed measure $\varphi \mu-\varphi \nu$.
\end{lemma}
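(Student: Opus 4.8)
The plan is to prove the weighted inequality by combining the dual (variational) characterisation of relative entropy with an exponential-moment control of $\varphi$ under $\nu$, and then optimising a single scale parameter; this is the standard route to Pinsker-type bounds carrying a weight. We may assume $\mu\ll\nu$ with density $h:=d\mu/d\nu$, since otherwise $H(\mu\mid\nu)=\infty$ and there is nothing to prove. Writing the total variation of the signed measure $\varphi\mu-\varphi\nu$ in dual form,
\begin{equation*}
\|\varphi(\mu-\nu)\|_{TV}=\sup_{|g|\le 1}\int g\,\varphi\,d(\mu-\nu),
\end{equation*}
it suffices to bound $\int g\varphi\,d(\mu-\nu)$ uniformly over measurable $g$ with $|g|\le 1$.

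First I would fix such a $g$ together with a parameter $\lambda>0$ and apply the Gibbs (Donsker--Varadhan) inequality $\int\Psi\,d\mu\le H(\mu\mid\nu)+\log\int e^{\Psi}\,d\nu$, valid for every measurable $\Psi$. Taking $\Psi=\lambda g\varphi$ and subtracting $\lambda\int g\varphi\,d\nu$ from both sides turns this into
\begin{equation*}
\int g\varphi\,d(\mu-\nu)\le\frac{1}{\lambda}H(\mu\mid\nu)+\frac{1}{\lambda}\Lambda(\lambda),\qquad
\Lambda(\lambda):=\log\int e^{\lambda(g\varphi-m)}\,d\nu,
\end{equation*}
where $m:=\int g\varphi\,d\nu$ and $\Lambda$ is the cumulant generating function of the \emph{centred} weight $g\varphi-m$ under $\nu$. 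The relative entropy has thereby been isolated as a term linear in $1/\lambda$, and the whole weight-dependence is pushed into $\Lambda$.

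The crux is a sub-Gaussian bound on this centred cumulant of the shape
\begin{equation*}
\Lambda(\lambda)\le\tfrac{1}{2}\lambda^2\Big(1+\log\int e^{\varphi^2}\,d\nu\Big).
\end{equation*}
Here one uses $|g|\le 1$, so that $|g\varphi|\le\varphi$, and converts the Orlicz/$\psi_2$ quantity $\int e^{\varphi^2}\,d\nu$ into a quadratic-in-$\lambda$ estimate for $\Lambda$ through an elementary splitting such as $\lambda(g\varphi-m)\le\tfrac12\lambda^2 v+\tfrac{1}{2v}(g\varphi-m)^2$ followed by a judicious choice of the internal parameter $v$ tied to the exponential moment. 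This is where the ``$\sqrt2$'' and the ``$1+\log\int e^{\varphi^2}\,d\nu$'' must be produced with exactly the right constants, and I expect it to be the main obstacle: the centring by $m$ has to be absorbed without degrading the constant (for instance by controlling $|m|$ itself through the same exponential moment), which is the delicate bookkeeping that distinguishes the sharp Villani constant from a merely qualitative sub-Gaussian bound.

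Finally I would insert the cumulant bound to get $\int g\varphi\,d(\mu-\nu)\le \tfrac1\lambda H(\mu\mid\nu)+\tfrac{\lambda}{2}\big(1+\log\int e^{\varphi^2}d\nu\big)$ and optimise over $\lambda>0$. The choice $\lambda=\big(2H(\mu\mid\nu)/(1+\log\int e^{\varphi^2}d\nu)\big)^{1/2}$ balances the two terms and yields $\sqrt{2\,H(\mu\mid\nu)\,(1+\log\int e^{\varphi^2}d\nu)}$; taking the supremum over $|g|\le 1$ gives precisely the claimed inequality. If $H(\mu\mid\nu)=0$ the left-hand side vanishes as well, so the optimisation is only needed when $H(\mu\mid\nu)>0$.
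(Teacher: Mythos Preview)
The paper does not supply its own proof of this lemma; it simply attributes it to Bolley--Villani and refers the reader there. So there is no in-paper argument to compare against.

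On your proposal itself: the Donsker--Varadhan step and the final optimisation are fine, but the cumulant bound you postulate, $\Lambda(\lambda)\le\tfrac12\lambda^2\bigl(1+\log\int e^{\varphi^2}\,d\nu\bigr)$ for all $\lambda>0$, is not true in general. Take $\nu=\mathrm{Bernoulli}(p)$ on $\{0,1\}$ with $p=e^{-a^2}$, $\varphi=a\,\mathbf{1}_{\{1\}}$ and $g\equiv 1$. Then $1+\log\int e^{\varphi^2}d\nu\to 1+\log 2$ as $a\to\infty$, while for the centred variable $\Lambda(\lambda)\sim p\bigl(e^{\lambda a}-1-\lambda a\bigr)$, which at $\lambda=2a$ is of order $e^{a^2}$; so no global quadratic bound with that variance proxy can hold. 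The Young-type splitting you sketch, $\lambda(g\varphi-m)\le\tfrac12\lambda^2 v+\tfrac{1}{2v}(g\varphi-m)^2$, after taking expectations produces an additive term $\log\E_\nu e^{(g\varphi-m)^2/(2v)}$ that does not scale like $\lambda^2$; this is exactly why the ``delicate bookkeeping'' you anticipate cannot be closed along this route.

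For reference, the Bolley--Villani argument is structurally different: one writes $\|\varphi(\mu-\nu)\|_{TV}=\int\varphi|h-1|\,d\nu$ with $h=d\mu/d\nu$, uses a pointwise inequality of the type $(h-1)^2\le C\,(h+2)(h\log h-h+1)$ followed by Cauchy--Schwarz to extract the factor $\sqrt{H(\mu\mid\nu)}$, and then controls $\int\varphi^2\,d\mu$ through the single use of the variational inequality $\int\varphi^2\,d\mu\le H(\mu\mid\nu)+\log\int e^{\varphi^2}\,d\nu$. No global sub-Gaussian cumulant estimate is needed.
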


Then, we show a Fenchel-Young-type inequality which enables us to obtain a measure exchange estimate. For its proof, sees e.g., \cite[Lemma 1]{jabin2018quantitative}.

\begin{lemma}[\cite{jabin2018quantitative}, Lemma 1]\label{lmm:changemeasure}
For any two probability measures $\rho$ and $\tilde{\rho}$ on a Polish space $E$ and some test function $F \in$ $L^1(\rho)$, one has that, for any $\eta>0$,
$$
\int_E F \rho(d x) \leq \frac{1}{\eta}\left(D_{KL}(\rho \| \tilde{\rho})+\log \int_E e^{\eta F} \tilde{\rho}(d x)\right) .
$$
\end{lemma}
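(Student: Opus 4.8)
The plan is to derive this inequality from the Gibbs variational principle, that is, from the nonnegativity of relative entropy applied to a suitably tilted version of $\tilde{\rho}$. First I would dispose of the degenerate cases. If $\rho$ fails to be absolutely continuous with respect to $\tilde{\rho}$, then $D_{KL}(\rho\|\tilde{\rho})=+\infty$ and the right-hand side is infinite, so nothing needs to be shown; likewise if $Z:=\int_E e^{\eta F}\,d\tilde{\rho}=+\infty$ the bound is trivial. Hence I may assume $\rho\ll\tilde{\rho}$, $0<Z<\infty$, and also $D_{KL}(\rho\|\tilde{\rho})<\infty$, since otherwise the inequality again holds vacuously.

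Next I would introduce the tilted (Gibbs) probability measure $\hat{\rho}$ defined by $d\hat{\rho}=Z^{-1}e^{\eta F}\,d\tilde{\rho}$. Because $e^{\eta F}>0$ pointwise, $\hat{\rho}$ and $\tilde{\rho}$ are mutually absolutely continuous, so $\rho\ll\tilde{\rho}$ gives $\rho\ll\hat{\rho}$ with Radon--Nikodym derivative $\frac{d\rho}{d\hat{\rho}}=\frac{d\rho}{d\tilde{\rho}}\cdot Z\,e^{-\eta F}$. Taking logarithms and integrating against $\rho$, I would write
\begin{align*}
D_{KL}(\rho\|\hat{\rho})
&=\int_E \log\frac{d\rho}{d\tilde{\rho}}\,d\rho+\log Z-\eta\int_E F\,d\rho \\
&=D_{KL}(\rho\|\tilde{\rho})+\log Z-\eta\int_E F\,d\rho.
\end{align*}
The conclusion then follows immediately from $D_{KL}(\rho\|\hat{\rho})\ge 0$: rearranging gives $\eta\int_E F\,d\rho\le D_{KL}(\rho\|\tilde{\rho})+\log Z$, and dividing by $\eta>0$ yields the claimed bound.

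The only genuine obstacle is justifying the additive splitting of the logarithm above, which must not degenerate into an ill-defined $\infty-\infty$. This is exactly where the hypotheses enter: the assumption $F\in L^1(\rho)$ makes $\eta\int_E F\,d\rho$ finite, the reduction to $0<Z<\infty$ makes $\log Z$ finite, and the reduction to $D_{KL}(\rho\|\tilde{\rho})<\infty$ makes the remaining summand finite, so each term is well defined and the identity is legitimate. An alternative route applies the scalar Young inequality $xy\le e^{x}+y\log y-y$, the conjugate pair of the exponential, with $x=\eta F$ and $y=\frac{d\rho}{d\tilde{\rho}}$, integrated against $\tilde{\rho}$; but this yields only $\eta\int_E F\,d\rho\le Z-1+D_{KL}(\rho\|\tilde{\rho})$, and one recovers the sharp constant $\log Z$ only after optimizing over an additive shift $F\mapsto F-c$, which is precisely the tilting performed above. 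The Gibbs-measure argument therefore carries out this optimization automatically and is the cleaner path, so it is the one I would present.
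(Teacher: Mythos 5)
The paper offers no proof of this lemma at all---it simply cites \cite[Lemma 1]{jabin2018quantitative}---and your argument is the standard one underlying that reference: tilt $\tilde{\rho}$ to $d\hat{\rho}=Z^{-1}e^{\eta F}d\tilde{\rho}$, expand $D_{KL}(\rho\|\hat{\rho})=D_{KL}(\rho\|\tilde{\rho})+\log Z-\eta\int_E F\,d\rho$, and invoke nonnegativity of relative entropy. Your proof is correct and complete, with the degenerate cases ($\rho\not\ll\tilde{\rho}$, $Z=\infty$, infinite entropy) and the potential $\infty-\infty$ in the logarithmic splitting handled properly, and your closing remark that the scalar Young inequality $xy\le e^x+y\log y-y$ only yields the constant $Z-1$ unless one optimizes over additive shifts of $F$ is also accurate.
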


Lemma \ref{lem:LDP} (\cite[Lemma 3.3]{du2024collision}) can be viewed as the Law of Large Numbers at exponential scale, generalizing the results in \cite[Theorem 3,4]{jabin2018quantitative}. For readers' convenience, here we briefly introduce the Hoeffding bound used in the below statement. The Hoeffding inequality \cite{vershynin2018high} claims that for $n$ independent centered real random variables $Y_1, \dots, Y_n$, there exists a universal constant $c_* > 0$ such that
\begin{equation}
    P\left(\left|\sum_{j=1}^n Y_j \right| \geq y \right) \leq 2\exp\left(-\frac{c_* y^2}{\sum_{j=1}^n \| Y_j \|_{\psi_2}^2} \right),\quad \forall\, y \geq 0,
\end{equation}
where the $\psi_2$ norm (or the Orlicz norm with $\psi_2(x) = \exp(x^2)-1$) for some sub-Gaussian random variable $X$ is given by
\begin{equation}
    \|X\|_{\psi_2} := \inf \left\{c > 0:\mathbb{E}\left[\exp(|x|^2/c^2)\right] \leq 2 \right\}.
\end{equation}

\begin{lemma}[\cite{du2024collision}, Lemma 3.3]\label{lem:LDP}
    Consider any $\rho$ being a probability measure of a sample space E. Suppose that $\psi(x)$ satisfies
    \begin{equation*}
        \int_E \psi(x) \rho =0
    \end{equation*}
    and for the universal constant $c_*>0$ in the Hoeffding's inequality, the following holds
    \begin{equation}
\|\psi(x)\|_\rho:=\inf \left\{c>0: \int_E \exp \left(|\psi(x)|^2 / c^2\right) \rho(d x) \mid \leq 2\right\}<c_*.
\end{equation}
Then,
\begin{equation}
\sup _{N \geq 1} \int_{E^N} \exp \left(\frac{1}{N}\left|\sum_{i=1}^N \psi\left(x_i\right)\right|^2\right) \rho^{\otimes N} \mathrm{dx}<\infty .
\end{equation}
\end{lemma}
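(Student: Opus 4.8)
The plan is to estimate the exponential moment $\int_{E^N}\exp(|S_N|^2/N)\,\rho^{\otimes N}(dx)$, where $S_N:=\sum_{i=1}^N\psi(x_i)$, directly through the tail behaviour of $|S_N|$, and to show that the resulting bound does not depend on $N$. Writing $x_1,\dots,x_N$ for i.i.d.\ samples from $\rho$, the summands $\psi(x_1),\dots,\psi(x_N)$ are independent, centered by the hypothesis $\int_E\psi\,\rho=0$, and share the common sub-Gaussian norm $\|\psi(x_i)\|_{\psi_2}=\|\psi\|_\rho$. This places us exactly in the setting of the Hoeffding bound recalled above.

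First I would apply Hoeffding's inequality to $S_N$. Since each summand has $\psi_2$-norm $\|\psi\|_\rho$, the variance proxy in the exponent equals $\sum_{i=1}^N\|\psi(x_i)\|_{\psi_2}^2=N\|\psi\|_\rho^2$, so that
\begin{equation*}
    \mathbb{P}\big(|S_N|\ge y\big)\le 2\exp\Big(-\frac{c_* y^2}{N\|\psi\|_\rho^2}\Big),\quad y\ge 0.
\end{equation*}
The decisive feature is the linear-in-$N$ scaling of $N\|\psi\|_\rho^2$: after the rescaling $y=\sqrt{Nt}$ the factor $N$ cancels, leaving the genuinely $N$-independent tail $\mathbb{P}(|S_N|\ge\sqrt{Nt})\le 2\exp(-c_* t/\|\psi\|_\rho^2)$.

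Next I would convert the exponential moment into an integral over these tails by the layer-cake identity. For the nonnegative random variable $|S_N|^2/N$ one has
\begin{equation*}
    \int_{E^N}\exp\Big(\frac{|S_N|^2}{N}\Big)\rho^{\otimes N}
    =1+\int_0^\infty \e^{t}\,\mathbb{P}\Big(\frac{|S_N|^2}{N}>t\Big)\,dt
    =1+\int_0^\infty \e^{t}\,\mathbb{P}\big(|S_N|>\sqrt{Nt}\big)\,dt.
\end{equation*}
Inserting the $N$-independent tail bound, the integrand is dominated by $2\exp\big(t(1-c_*/\|\psi\|_\rho^2)\big)$. This is where the smallness hypothesis on $\|\psi\|_\rho$ enters: it is calibrated so that the exponential decay rate of the tail strictly exceeds the $\e^{t}$ growth rate of the integrand, whence the integral converges to a finite constant that does not involve $N$. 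Taking the supremum over $N\ge 1$ then yields the claim.

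The main obstacle -- and the whole point of the lemma -- is securing uniformity in $N$. This rests entirely on the fact that in Hoeffding's inequality the variance proxy $\sum_i\|\psi(x_i)\|_{\psi_2}^2$ grows linearly in $N$, exactly matching the $1/N$ normalization inside the exponential moment; any weaker control of the summands would let the bound blow up with $N$. The secondary delicate point is the threshold: the strict inequality in the hypothesis is essential, since at the critical value the exponent $t(1-c_*/\|\psi\|_\rho^2)$ would fail to be negative and the $t$-integral, hence the exponential moment, would diverge. If $\psi$ is $\R^d$-valued rather than scalar, I would apply the tail estimate coordinatewise together with a union bound, which changes only the constants and not the structure of the argument.
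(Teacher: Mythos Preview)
The paper does not supply its own proof of this lemma; it is quoted from \cite[Lemma~3.3]{du2024collision} and only the statement (together with the Hoeffding bound it rests on) is recorded. There is therefore no in-paper argument to compare against.

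Your approach---Hoeffding's inequality for the tail of $S_N$, then the layer-cake representation of $\E\exp(|S_N|^2/N)$---is exactly the intended one and is correct in structure. One minor bookkeeping point: your final integral $\int_0^\infty 2\exp\big(t(1-c_*/\|\psi\|_\rho^2)\big)\,dt$ converges precisely when $\|\psi\|_\rho<\sqrt{c_*}$, whereas the hypothesis as stated is $\|\psi\|_\rho<c_*$. Since the universal constant $c_*$ in the sub-Gaussian Hoeffding inequality (e.g.\ \cite{vershynin2018high}) is an absolute constant strictly less than $1$, the stated hypothesis is in fact stronger than what your computation needs, so there is no gap.
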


The following linear scaling property of the relative entropy is well-known for controlling the marginal distribution.
\begin{lemma}[linear scaling for KL-divergence]\label{lem:linear}
Let $\nu^n \in \mathcal{P}_s(E^{n})$ be a symmetric distribution over some space tensorized space $E^n$ and $\bar{\nu} \in \mathcal{P}(E)$.
For $1 \leq k \leq n$, define its $k$-th marginal $\nu^{n:k}$ by
\begin{equation*}
    \nu^{n:k}(z_1,\dots,z_k) := \int_{E^{n-k}} \nu^N(z_1,\dots,z_n)dz_{k+1}\dots dz_n.
\end{equation*}
Assume that $\nu^{n:k}\ll \bar{\nu}^{\otimes k}$ for any $1\le k \le N.$
Then it holds that
\begin{equation*}
    H\left(\nu^{n:k} \mid \bar{\nu}^{\otimes k}\right) \leq 2\frac{k}{n}H\left(\nu^{n} \mid \bar{\nu}^{\otimes n}\right).
\end{equation*}
\end{lemma}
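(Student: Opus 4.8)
The plan is to reduce the bound on the $k$-marginal relative entropy to a bound on the full joint relative entropy by exploiting the symmetry of $\nu^n$ together with the tensor structure of the reference measure $\bar\nu^{\otimes n}$. The central tool is the super-additivity (or chain-rule) decomposition of the relative entropy against a product measure. First I would recall the chain rule for relative entropy: writing the conditional entropy
\begin{equation*}
    H\big(\nu^{n:j}\mid \bar\nu^{\otimes j}\big) - H\big(\nu^{n:j-1}\mid\bar\nu^{\otimes(j-1)}\big)
    = \int H\big(\nu^{n:j\,|\,j-1}(\cdot\mid z_1,\dots,z_{j-1})\mid\bar\nu\big)\,\nu^{n:j-1}(dz_1\cdots dz_{j-1}),
\end{equation*}
so that the increments of the marginal-entropy sequence $j\mapsto H(\nu^{n:j}\mid\bar\nu^{\otimes j})$ are nonnegative, and in particular the sequence is monotone nondecreasing in $j$.

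Next I would establish the key combinatorial/convexity estimate that the sum of $k$-marginal entropies over a partition is dominated by the full entropy. Concretely, partition $\{1,\dots,n\}$ into $\lfloor n/k\rfloor$ disjoint blocks of size $k$ (assuming for the moment $k\mid n$, giving $n/k$ blocks). By the symmetry of $\nu^n$, each block has the same marginal law, equal to $\nu^{n:k}$, and the reference $\bar\nu^{\otimes n}$ factorizes across the blocks. The sub-additivity of relative entropy with respect to a product reference measure then gives
\begin{equation*}
    \sum_{\text{blocks}} H\big(\nu^{n:k}\mid\bar\nu^{\otimes k}\big) \le H\big(\nu^n\mid\bar\nu^{\otimes n}\big),
\end{equation*}
i.e. $\frac{n}{k}H(\nu^{n:k}\mid\bar\nu^{\otimes k})\le H(\nu^n\mid\bar\nu^{\otimes n})$, which already yields the claim with constant $1$ when $k\mid n$. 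To handle general $1\le k\le n$ where $k$ need not divide $n$, I would combine the monotonicity from the first step with the block bound: choosing the largest multiple $k'=k\lfloor n/k\rfloor\le n$ and using $H(\nu^{n:k}\mid\bar\nu^{\otimes k})\le H(\nu^{n:k''}\mid\bar\nu^{\otimes k''})$ for the block size, the rounding loses at most a factor $2$, since $\lfloor n/k\rfloor\ge \frac{n}{2k}$ whenever $1\le k\le n$. This is precisely where the constant $2$ in the statement enters.

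The main obstacle, and the step requiring the most care, is the sub-additivity inequality $\sum_{\text{blocks}}H(\nu^{n:k}\mid\bar\nu^{\otimes k})\le H(\nu^n\mid\bar\nu^{\otimes n})$ for a single fixed partition. This does not follow from a naive averaging; it relies on the fact that relative entropy against a \emph{product} measure controls the sum of block-wise relative entropies, which is itself a consequence of the nonnegativity of mutual-information-type quantities (the entropy of the joint law exceeds the sum of the marginal entropies of a partition, with the gap equal to the total correlation). I would make this rigorous via the chain rule of the first step applied block-by-block, bounding each conditional block entropy below by the corresponding unconditional marginal block entropy using the data-processing/Jensen inequality for the convex map $(g_1,g_2)\mapsto g_2\log(g_2/g_1)$. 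The symmetry of $\nu^n$ is what makes every block contribute the identical term $H(\nu^{n:k}\mid\bar\nu^{\otimes k})$, converting the telescoped chain-rule sum into the clean factor $n/k$.
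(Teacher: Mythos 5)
Your proposal is correct and follows essentially the same route as the proof the paper invokes by citation (Csisz\'ar 1984, Eq.~(2.10); Miclo 2001): superadditivity of relative entropy against the product reference $\bar{\nu}^{\otimes n}$ across disjoint blocks (proved, as you indicate, by the chain rule plus convexity of $(g_1,g_2)\mapsto g_2\log(g_2/g_1)$ in averaging the conditional laws), symmetry of $\nu^n$ making each block contribute the identical term $H(\nu^{n:k}\mid\bar{\nu}^{\otimes k})$, and the rounding bound $\lfloor n/k\rfloor\ge n/(2k)$ supplying the factor $2$. The only blemish is the undefined $k''$ in your rounding step, which is cleanest stated as: partition $\{1,\dots,n\}$ into $\lfloor n/k\rfloor$ blocks each of size at least $k$, and use your monotonicity-in-$j$ observation to lower-bound each block's entropy by $H(\nu^{n:k}\mid\bar{\nu}^{\otimes k})$ before applying superadditivity.
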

The proof can be found in  e.g. \cite[Lemma 3.9]{miclo2001genealogies}, \cite[Equation (2.10), page 772]{csiszar1984sanov}.

\subsection{Estimates of iterated exponential integrals}\label{subsec: aux ode}
Here we present a useful estimate attributed to Lacker \cite{lacker2023hierarchies,lacker2023sharp}. We omit the proof but refer to \cite[Section 5]{lacker2023hierarchies}.

\begin{lemma}[\cite{lacker2023hierarchies}, Lemma 4.8]\label{lem:AB}
For integers $\ell \ge k \ge 1$, with the constant $\gamma>0,$ define
\begin{equation*}
    A_k^\ell (t_k): = \Big(\Pi_{j=k}^\ell\gamma j\Big)\int_0^{t_k}\int_0^{t_{k+1}}\cdots\int_0^{t_\ell} \e^{-\sum\limits_{j=k}^\ell \gamma j(t_j-t_{j+1})}dt_{\ell+1}\cdots dt_{k+1}.
\end{equation*}
Then one has
\begin{equation}\label{eq:lemab1}
    A_k^{\ell}(t) \leq \exp \left(-2(\ell+1)\left(\e^{-\gamma t}-\frac{k}{\ell+1}\right)_{+}^2\right),
\end{equation}
where $x_+ :=\max\{0,x\}$.
Moreover, for integers $r \geq 0$,
\begin{equation}\label{eq:lemab2}
 \sum_{\ell=k}^{\infty} \ell^r A_k^{\ell}(t) \leq \frac{(k+r)!}{(k-1)!} \frac{\e^{\gamma(r+1) t}-1}{r+1}.
\end{equation}
\end{lemma}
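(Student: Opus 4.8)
The plan is to exploit a probabilistic representation of $A_k^\ell(t)$ as a distribution function. First I would perform the volume-preserving change of variables $u_j := t_j - t_{j+1}$ ($j=k,\dots,\ell$) in the nested integral; it maps the simplex $\{0\le t_{\ell+1}\le\cdots\le t_{k+1}\le t\}$ onto $\{u_j\ge 0,\ \sum_{j=k}^\ell u_j\le t\}$ and turns the prefactor and integrand into the joint density $\prod_{j=k}^\ell \gamma j\,\e^{-\gamma j u_j}$ of independent exponentials $E_j\sim\mathrm{Exp}(\gamma j)$, so that
\[ A_k^\ell(t)=\mathbb{P}\Big(\textstyle\sum_{j=k}^\ell E_j\le t\Big). \]
Reading $E_j$ as the holding time at level $j$ of a linear death process in which each of $\ell$ individuals dies independently at rate $\gamma$ (so the total rate at level $j$ is $\gamma j$), the sum $\sum_{j=k}^\ell E_j$ is exactly the time for the population to fall from $\ell$ to $k-1$, and the number alive at time $t$ is $\mathrm{Bin}(\ell,\e^{-\gamma t})$. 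Hence $A_k^\ell(t)=\mathbb{P}(\mathrm{Bin}(\ell,\e^{-\gamma t})\le k-1)$.

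For \eqref{eq:lemab1} I would then couple $\mathrm{Bin}(\ell,q)$ and $\mathrm{Bin}(\ell+1,q)$ (with $q:=\e^{-\gamma t}$) on the same $\ell$ Bernoulli coordinates: since one extra Bernoulli raises the count by at most one, $\{\mathrm{Bin}(\ell,q)\le k-1\}\subseteq\{\mathrm{Bin}(\ell+1,q)\le k\}$, whence $A_k^\ell(t)\le\mathbb{P}(\mathrm{Bin}(\ell+1,q)\le k)$. The lower-tail Hoeffding inequality for a sum of $\ell+1$ independent $[0,1]$ variables of mean $(\ell+1)q$ gives $\mathbb{P}(\mathrm{Bin}(\ell+1,q)\le k)\le\exp\big(-2((\ell+1)q-k)_+^2/(\ell+1)\big)$, which is precisely $\exp\big(-2(\ell+1)(\e^{-\gamma t}-k/(\ell+1))_+^2\big)$; the positive part absorbs the trivial regime $(\ell+1)q\le k$. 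As a by-product this bound decays like $\e^{-c(t)\ell}$ in $\ell$, guaranteeing convergence of the series in \eqref{eq:lemab2} and legitimizing the term-by-term differentiation used next.

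For \eqref{eq:lemab2} I would first record a recursion in the upper index. Writing $A_k^\ell$ as the convolution of $\mathrm{Exp}(\gamma\ell)$ with $\sum_{j=k}^{\ell-1}E_j$ and differentiating (or differentiating the nested integral directly) yields
\[ \tfrac{d}{dt}A_k^\ell(t)=\gamma\ell\big(A_k^{\ell-1}(t)-A_k^\ell(t)\big),\qquad \ell\ge k, \]
with conventions $A_k^{k-1}\equiv 1$ and $A_k^\ell(0)=0$. Setting $\Phi_r(t):=\sum_{\ell=k}^\infty\ell^r A_k^\ell(t)$, I multiply by $\ell^r$ and sum; an Abel shift $\ell\mapsto\ell-1$ in the $A_k^{\ell-1}$ term, the boundary value $A_k^{k-1}=1$, and the expansion $(\ell+1)^{r+1}-\ell^{r+1}=\sum_{i=0}^r\binom{r+1}{i}\ell^i$ produce the lower-triangular system
\[ \Phi_r'(t)=\gamma(r+1)\Phi_r(t)+\gamma k^{r+1}+\gamma\sum_{i=0}^{r-1}\binom{r+1}{i}\Phi_i(t),\qquad \Phi_r(0)=0. \]
I would then induct on $r$. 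The base case $r=0$ solves exactly, $\Phi_0(t)=k(\e^{\gamma t}-1)$, matching the claim. For the inductive step, the self-coefficient $\gamma(r+1)$ explains the factor $\e^{\gamma(r+1)t}$, and the forcing is assembled from $\Phi_i$, $i<r$, which by hypothesis grow no faster than $\e^{\gamma r t}$; integrating the scalar linear ODE against $\e^{\gamma(r+1)(t-s)}$ and collecting the dominant $\e^{\gamma(r+1)t}$ contribution reproduces the constant $\tfrac{(k+r)!}{(k-1)!}$, while the subleading exponentials are non-negative and discarded (for $r=1$ one checks the slack is exactly $k(\e^{\gamma t}-1)\ge 0$).

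The main obstacle is the inductive bookkeeping in \eqref{eq:lemab2}: one must verify that the weights $\binom{r+1}{i}$ and the inductive constants $\tfrac{(k+i)!}{(k-1)!}$ combine, after integration, to give exactly $\tfrac{(k+r)!}{(k-1)!(r+1)}$ as the coefficient of the leading exponential, and that all lower-order terms carry the correct sign. This is a Vandermonde-type factorial identity rather than a conceptual hurdle; the death-process representation and the recursion do the structural work, and \eqref{eq:lemab1} is comparatively immediate once the binomial identification is in hand.
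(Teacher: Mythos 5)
Your probabilistic reading of $A_k^\ell(t)$ is correct and, for \eqref{eq:lemab1}, reproduces essentially the original argument: the paper itself contains no proof (it defers to \cite[Section 5]{lacker2023hierarchies}), and Lacker's proof is exactly the representation $A_k^\ell(t)=\mathbb{P}\big(\sum_{j=k}^{\ell}E_j\le t\big)$ with $E_j\sim\mathrm{Exp}(\gamma j)$, the identification with a binomial/negative-binomial count (he phrases it via the dual Yule process started from $k$, which is the same computation as your death process), the passage from $\ell$ to $\ell+1$ Bernoulli trials, and Hoeffding. So part \eqref{eq:lemab1} of your proposal is sound.

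The genuine gap is in your proof of \eqref{eq:lemab2}. The recursion $\frac{d}{dt}A_k^\ell=\gamma\ell(A_k^{\ell-1}-A_k^\ell)$ and the lower-triangular system for $\Phi_r$ are correct (they are equalities), but the induction you propose does not close: the hypothesis $\Phi_i(t)\le c_i(\e^{\gamma(i+1)t}-1)$ with $c_i=\frac{(k+i)!}{(k-1)!(i+1)}$ is too lossy to propagate. Take $k=1$, $r=2$: the inductive bounds are $\Phi_0\le \e^{\gamma t}-1$ and $\Phi_1\le \e^{2\gamma t}-1$, and your Duhamel step gives
\begin{equation*}
\Phi_2(t)\le\int_0^t\e^{3\gamma(t-s)}\,\gamma\big(\e^{\gamma s}+3\e^{2\gamma s}-3\big)\,ds=\tfrac{5}{2}\e^{3\gamma t}-3\e^{2\gamma t}-\tfrac12\e^{\gamma t}+1,
\end{equation*}
whose leading coefficient $\tfrac52$ exceeds the target $2(\e^{3\gamma t}-1)$ as soon as $\e^{\gamma t}>6$. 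So the failure is in the \emph{leading} exponential, not in ``non-negative subleading terms that can be discarded'': the slack $c_1(\e^{2\gamma t}-1)-\Phi_1(t)=k(\e^{\gamma t}-1)$ that you yourself computed at $r=1$ is precisely of the size needed at level $r=2$, and since the $\Phi_i$ enter the forcing with positive weights $\binom{r+1}{i}$, discarding it inflates the constant beyond $\frac{(k+r)!}{(k-1)!(r+1)}$. (Feeding the exact $\Phi_1=\e^{2\gamma t}-\e^{\gamma t}$ into the same integral gives $2\e^{3\gamma t}-3\e^{2\gamma t}+\e^{\gamma t}\le 2(\e^{3\gamma t}-1)$, so the lemma is true; only your induction hypothesis is too weak.)

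The clean repair is to replace monomial weights by rising factorials, which is what Lacker does probabilistically: with $Y_t$ the Yule population started from $k$, one has $A_k^\ell(t)=\mathbb{P}(Y_t\ge \ell+1)$ where $Y_t$ is negative binomial (number of Bernoulli$(\e^{-\gamma t})$ trials until the $k$-th success), hence $\sum_{\ell\ge k}\ell^r A_k^\ell(t)=\E\big[\sum_{n=k}^{Y_t-1}n^r\big]$. Using $n^r\le (n+1)(n+2)\cdots(n+r)$ and the telescoping identity $(n+1)\cdots(n+r)=\frac{1}{r+1}\big[(n+1)\cdots(n+r+1)-n(n+1)\cdots(n+r)\big]$, the sum is bounded by $\frac{1}{r+1}\big(\E[Y_t(Y_t+1)\cdots(Y_t+r)]-k(k+1)\cdots(k+r)\big)$, and the classical moment identity $\E[Y_t(Y_t+1)\cdots(Y_t+r)]=\frac{(k+r)!}{(k-1)!}\e^{\gamma(r+1)t}$ yields \eqref{eq:lemab2} exactly; no induction on $r$ is needed.
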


\section*{Acknowledgment}
This work was partially supported by the National Key R\&D Program of China, Project Number 2021YFA1002800. The work of S. Jin and Y. Wang was financially supported by the National Science Foundation for International Senior Scientists grant No. 12350710181, and the Shanghai Municipal Science and Technology Key Project No. 22JC1402300.
The work of L. Li was partially supported by NSFC 12371400 and Shanghai Municipal Science and Technology Major Project 2021SHZDZX0102.

\bibliographystyle{plain}
\bibliography{main}

\begin{thebibliography}{10}

\bibitem{bertozzi2012characterization}
Andrea~L Bertozzi, John~B Garnett, and Thomas Laurent.
\newblock Characterization of radially symmetric finite time blowup in multidimensional aggregation equations.
\newblock {\em SIAM Journal on Mathematical Analysis}, 44(2):651--681, 2012.

\bibitem{bresch2024duality}
Didier Bresch, Mitia Duerinckx, and P-E Jabin.
\newblock A duality method for mean-field limits with singular interactions.
\newblock {\em arXiv preprint arXiv:2402.04695}, 2024.

\bibitem{buck1966biology}
John Buck and Elisabeth Buck.
\newblock Biology of synchronous flashing of fireflies, 1966.

\bibitem{carrillo2021random}
Jos{\'e}~Antonio Carrillo, Shi Jin, and Yijia Tang.
\newblock {Random batch particle methods for the homogeneous Landau equation}.
\newblock {\em arXiv preprint arXiv:2110.06430}, 2021.

\bibitem{cattiaux2024entropy}
Patrick Cattiaux.
\newblock Entropy on the path space and application to singular diffusions and mean-field models.
\newblock {\em arXiv preprint arXiv:2404.09552}, 2024.

\bibitem{chaintron2022propagation}
Louis-Pierre Chaintron and Antoine Diez.
\newblock Propagation of chaos: a review of models, methods and applications. i. models and methods.
\newblock {\em arXiv preprint arXiv:2203.00446}, 2022.

\bibitem{chaintron2022propagation2}
Louis-Pierre Chaintron and Antoine Diez.
\newblock Propagation of chaos: a review of models, methods and applications. ii. applications.
\newblock {\em arXiv preprint arXiv:2106.14812}, 2022.

\bibitem{csiszar1984sanov}
Imre Csisz{\'a}r.
\newblock Sanov property, generalized i-projection and a conditional limit theorem.
\newblock {\em The Annals of Probability}, pages 768--793, 1984.

\bibitem{cucker2007emergent}
Felipe Cucker and Steve Smale.
\newblock Emergent behavior in flocks.
\newblock {\em IEEE Transactions on automatic control}, 52(5):852--862, 2007.

\bibitem{du2024collision}
Kai Du and Lei Li.
\newblock A collision-oriented interacting particle system for landau-type equations and the molecular chaos.
\newblock {\em arXiv preprint arXiv:2408.16252}, 2024.

\bibitem{gao2024rbmd}
Weihang Gao, Teng Zhao, Yongfa Guo, Jiuyang Liang, Huan Liu, Maoying Luo, Zedong Luo, Wei Qin, Yichao Wang, Qi~Zhou, et~al.
\newblock Rbmd: A molecular dynamics package enabling to simulate 10 million all-atom particles in a single graphics processing unit.
\newblock {\em arXiv preprint arXiv:2407.09315}, 2024.

\bibitem{golse2016dynamics}
Fran{\c{c}}ois Golse.
\newblock On the dynamics of large particle systems in the mean field limit.
\newblock In {\em Macroscopic and large scale phenomena: coarse graining, mean field limits and ergodicity}, pages 1--144. Springer, 2016.

\bibitem{grass2024sharp}
Jules Grass, Arnaud Guillin, and Christophe Poquet.
\newblock {Sharp propagation of chaos for McKean-Vlasov equation with non constant diffusion coefficient}.
\newblock {\em arXiv preprint arXiv:2410.20874}, 2024.

\bibitem{ha2021uniform}
Seung-Yeal Ha, Shi Jin, Doheon Kim, and Dongnam Ko.
\newblock {Uniform-in-time error estimate of the random batch method for the Cucker--Smale model}.
\newblock {\em Mathematical Models and Methods in Applied Sciences}, 31(06):1099--1135, 2021.

\bibitem{ha2009simple}
Seung-Yeal Ha and Jian-Guo Liu.
\newblock A simple proof of the cucker-smale flocking dynamics and mean-field limit.
\newblock {\em Communications in Mathematical Sciences}, 7(2):297--325, 2009.

\bibitem{Ha2008particle}
Seung-Yeal Ha and Eitan Tadmor.
\newblock From particle to kinetic and hydrodynamic descriptions of flocking.
\newblock {\em Kinetic and Related Models}, 1(3):415--435, 2008.

\bibitem{hao2024strong}
Zimo Hao, Michael R{\"o}ckner, and Xicheng Zhang.
\newblock {Strong convergence of propagation of chaos for McKean--Vlasov SDEs with singular interactions}.
\newblock {\em SIAM Journal on Mathematical Analysis}, 56(2):2661--2713, 2024.

\bibitem{horstmann20031970}
Dirk Horstmann.
\newblock {From 1970 until present\,:\,the Keller-Segel model in chemotaxis and its consequences I.}
\newblock {\em Jahresbericht der Deutschen Mathematiker-Vereinigung}, 105(3):103--165, 2003.

\bibitem{huang2025mean}
Zhenyu Huang, Shi Jin, and Lei Li.
\newblock Mean field error estimate of the random batch method for large interacting particle system.
\newblock {\em ESAIM: Mathematical Modelling and Numerical Analysis}, 59(1):265--289, 2025.

\bibitem{jabin2014review}
Pierre-Emmanuel Jabin.
\newblock {A review of the mean field limits for Vlasov equations}.
\newblock {\em Kinetic and Related models}, 7(4):661--711, 2014.

\bibitem{jabin2016mean}
Pierre-Emmanuel Jabin and Zhenfu Wang.
\newblock {Mean field limit and propagation of chaos for Vlasov systems with bounded forces}.
\newblock {\em Journal of Functional Analysis}, 271(12):3588--3627, 2016.

\bibitem{jabin2017mean}
Pierre-Emmanuel Jabin and Zhenfu Wang.
\newblock Mean field limit for stochastic particle systems.
\newblock {\em Active Particles, Volume 1: Advances in Theory, Models, and Applications}, pages 379--402, 2017.

\bibitem{jabin2018quantitative}
Pierre-Emmanuel Jabin and Zhenfu Wang.
\newblock Quantitative estimates of propagation of chaos for stochastic systems with {$W^{-1,\infty}$} kernels.
\newblock {\em Inventiones mathematicae}, 214:523--591, 2018.

\bibitem{jin2021randomreview}
Shi Jin and Lei Li.
\newblock Random batch methods for classical and quantum interacting particle systems and statistical samplings.
\newblock In {\em Active Particles, Volume 3: Advances in Theory, Models, and Applications}, pages 153--200. Springer, 2021.

\bibitem{jin2022mean}
Shi Jin and Lei Li.
\newblock {On the mean field limit of the Random Batch Method for interacting particle systems}.
\newblock {\em Science China Mathematics}, pages 1--34, 2022.

\bibitem{jin2020random}
Shi Jin, Lei Li, and Jian-Guo Liu.
\newblock Random batch methods (rbm) for interacting particle systems.
\newblock {\em Journal of Computational Physics}, 400:108877, 2020.

\bibitem{jin2022random}
Shi Jin, Lei Li, and Yiqun Sun.
\newblock {On the Random Batch Method for second order interacting particle systems}.
\newblock {\em Multiscale Modeling \& Simulation}, 20(2):741--768, 2022.

\bibitem{jin2021randomEwald}
Shi Jin, Lei Li, Zhenli Xu, and Yue Zhao.
\newblock {A random batch Ewald method for particle systems with Coulomb interactions}.
\newblock {\em SIAM Journal on Scientific Computing}, 43(4):B937--B960, 2021.

\bibitem{jin2023ergodicity}
Shi Jin, Lei Li, Xuda Ye, and Zhennan Zhou.
\newblock Ergodicity and long-time behavior of the random batch method for interacting particle systems.
\newblock {\em Mathematical Models and Methods in Applied Sciences}, 33(01):67--102, 2023.

\bibitem{LiXiantao2020random}
Shi Jin and Xiantao Li.
\newblock Random batch algorithms for quantum monte carlo simulations.
\newblock {\em arXiv preprint arXiv:2008.12990}, 2020.

\bibitem{kac1956foundations}
Mark Kac.
\newblock Foundations of kinetic theory.
\newblock In {\em Proceedings of The third Berkeley symposium on mathematical statistics and probability}, volume~3, pages 171--197, 1956.

\bibitem{kuramoto1975self}
Yoshiki Kuramoto.
\newblock Self-entrainment of a population of coupled non-linear oscillators.
\newblock In {\em International Symposium on Mathematical Problems in Theoretical Physics: January 23--29, 1975, Kyoto University, Kyoto/Japan}, pages 420--422. Springer, 1975.

\bibitem{lacker2023hierarchies}
Daniel Lacker.
\newblock Hierarchies, entropy, and quantitative propagation of chaos for mean field diffusions.
\newblock {\em Probability and Mathematical Physics}, 4(2):377--432, 2023.

\bibitem{lacker2023sharp}
Daniel Lacker and Luc Le~Flem.
\newblock Sharp uniform-in-time propagation of chaos.
\newblock {\em Probability Theory and Related Fields}, 187(1-2):443--480, 2023.

\bibitem{li2022some}
Lei Li, Jian-Guo Liu, and Yijia Tang.
\newblock {Some Random Batch Particle Methods for the Poisson-Nernst-Planck and Poisson-Boltzmann Equations}.
\newblock {\em Communications in Computational Physics}, 32(1), 2022.

\bibitem{li2020random}
Lei Li, Zhenli Xu, and Yue Zhao.
\newblock {A random-batch Monte Carlo method for many-body systems with singular kernels}.
\newblock {\em SIAM Journal on Scientific Computing}, 42(3):A1486--A1509, 2020.

\bibitem{liang2022superscalability}
Jiuyang Liang, Pan Tan, Yue Zhao, Lei Li, Shi Jin, Liang Hong, and Zhenli Xu.
\newblock {Superscalability of the random batch Ewald method}.
\newblock {\em The Journal of Chemical Physics}, 156(1), 2022.

\bibitem{majda2002vorticity}
Andrew~J Majda, Andrea~L Bertozzi, and A~Ogawa.
\newblock Vorticity and incompressible flow. cambridge texts in applied mathematics.
\newblock {\em Appl. Mech. Rev.}, 55(4):B77--B78, 2002.

\bibitem{miclo2001genealogies}
Laurent Miclo and Pierre Del~Moral.
\newblock Genealogies and increasing propagation of chaos for feynman-kac and genetic models.
\newblock {\em The Annals of Applied Probability}, 11(4):1166--1198, 2001.

\bibitem{motsch2011new}
Sebastien Motsch and Eitan Tadmor.
\newblock A new model for self-organized dynamics and its flocking behavior.
\newblock {\em Journal of Statistical Physics}, 144:923--947, 2011.

\bibitem{motsch2014heterophilious}
Sebastien Motsch and Eitan Tadmor.
\newblock Heterophilious dynamics enhances consensus.
\newblock {\em SIAM review}, 56(4):577--621, 2014.

\bibitem{pinsker1964information}
Mark~S Pinsker.
\newblock Information and information stability of random variables and processes.
\newblock {\em Holden-Day}, 1964.

\bibitem{toner1998flocks}
John Toner and Yuhai Tu.
\newblock Flocks, herds, and schools: A quantitative theory of flocking.
\newblock {\em Physical review E}, 58(4):4828, 1998.

\bibitem{vershynin2018high}
Roman Vershynin.
\newblock {\em High-dimensional probability: An introduction with applications in data science}, volume~47.
\newblock Cambridge university press, 2018.

\bibitem{villani2009optimal}
C{\'e}dric Villani et~al.
\newblock {\em Optimal transport: old and new}, volume 338.
\newblock Springer, 2009.

\bibitem{ye2024error}
Xuda Ye and Zhennan Zhou.
\newblock Error analysis of time-discrete random batch method for interacting particle systems and associated mean-field limits.
\newblock {\em IMA Journal of Numerical Analysis}, 44(3):1660--1698, 2024.

\end{thebibliography}

\appendix
\section{Computational details in Proposition \ref{prop:5.1}.}\label{app:prop5.1}
Here we present calculus details of \eqref{eq:ddt prop5.1} in the proof of Proposition \ref{prop:5.1}. 
By straightforward computation, one has
\begin{align*}
     \frac{d}{dt} \int \frac{|\delta f|^4}{(f+\tilde{f})^3} 
    =& 4\int \frac{(\delta f)^3}{(f+\tilde{f})^3} \left( -\nabla_x \cdot (b^{\xi_n} f) + \sigma \nabla_x^2 : f + \nabla_x \cdot (b^{\xi_n^i} \tilde{f}) - \sigma\nabla_x^2 : \tilde{f}\right)\\
    &- 3\int \frac{(\delta f)^4}{(f+\tilde{f})^4} \left( -\nabla_x \cdot (b^{\xi_n} f) + \sigma \nabla_x^2 : f - \nabla_x \cdot (b^{\xi_n^i} \tilde{f}) + \sigma\nabla_x^2 : \tilde{f}\right)\\
    =& -4\int \nabla_x\left(\frac{\delta f}{f+\tilde{f}}\right)^3 \left( \nabla_x [\sigma I \delta f] -b^{\xi_n} \delta f - \delta b^i \tilde{f} \right)\\
    & +3\int \nabla_x\left(\frac{\delta f)}{f+\tilde{f})}\right)^4 \left( \nabla_x  [\sigma I (f+\tilde{f})]- b^{\xi_n} (f+\tilde{f}) + \delta b^i \tilde{f}\right)\\
    =& -12 \int \left(\frac{\delta f}{f+ \tilde{f}}\right)^2 \nabla_x \left(\frac{\delta f}{f+\tilde{f}}\right) \cdot [\sigma I \nabla_x \delta f] \\
    & +6 \int \left(\frac{\delta f}{f+ \tilde{f}}\right)^2 \nabla_x \left(\frac{\delta f}{f+\tilde{f}}\right)^2 \cdot [\sigma I \nabla_x (f + \tilde{f})]\\
    & -12 \int \left(\frac{\delta f}{f+ \tilde{f}}\right)^2 \nabla_x \left(\frac{\delta f}{f+\tilde{f}}\right) \cdot (-b^{\xi_n} \delta f - \delta b^i \tilde{f})\\
    & +12 \int \left(\frac{\delta f}{f+ \tilde{f}}\right)^3 \nabla_x \left(\frac{\delta f}{f+\tilde{f}}\right) \cdot (-b^{\xi_n}(f + \tilde{f}) + \delta b^i \tilde{f})\\
    =&: I_1 + I_2 + I_3 + I_4.
\end{align*}
Note that for any $Nd\times Nd$ matrix $A$, the following identity holds
\begin{multline*}
    \nabla_x \left(\frac{\delta f}{f + \tilde{f}}\right) \cdot A \nabla
    _x \delta f - \frac{1}{2}\nabla_x \left( \frac{\delta f}{f + \tilde{f}}\right)^2 \cdot A \nabla_x (f+\tilde{f})
    \\= (f+\tilde{f}) A :  \left(\nabla_x\left(\frac{\delta f}{f + \tilde{f}}\right) \otimes \nabla_x \left(\frac{\delta f}{f + \tilde{f}}\right)\right)
\end{multline*}
Then, one has
\begin{equation*}
    \begin{aligned}
        I_1 + I_2 =& -12 \int \left(\frac{\delta f}{f + \tilde{f}}\right)^2 \left\{\nabla_x \left(\frac{\delta f}{f + \tilde{f}}\right) \cdot [\sigma I \nabla_x \delta f] - \frac{1}{2}\nabla_x \left(\frac{\delta f}{f + \tilde{f}}\right)^2 \cdot [\sigma I \nabla_x(f + \tilde{f})]\right\}\\
        =& -12 \int \left(\frac{\delta f}{f + \tilde{f}}\right)^2 (f+\tilde{f})\sigma I : \left(\nabla_x \left(\frac{\delta f}{f + \tilde{f}}\right) \otimes \nabla_x \left(\frac{\delta f}{f + \tilde{f}}\right) \right)\\
        \le & 0.
    \end{aligned}
\end{equation*}
The remaining two terms can be combined to equal
\begin{align*}
    I_3 +I_4 =& 12 \int \left(\frac{\delta f}{f + \tilde{f}}\right)^2 \nabla\left(\frac{\delta f}{f + \tilde{f}}\right) \cdot \left( b^{\xi_n} \delta f + \delta b^i \tilde{f} - b^{\xi_n} \delta f + \frac{\delta b^i \tilde{f} \delta f}{f + \tilde{f}}\right)\\
    =& 24 \int \left(\frac{\delta f}{f + \tilde{f}}\right)^2 \nabla_x \left(\frac{\delta f}{f + \tilde{f}}\right) \delta b^i \frac{f \tilde{f}}{f + \tilde{f}}\\
    =& 8 \int \nabla_x \left(\frac{\delta f}{f + \tilde{f}}\right)^3 \cdot \delta b^i \frac{f\tilde{f}}{f+\tilde{f}}\\
    =& - 8 \int \left(\frac{\delta f}{f + \tilde{f}}\right)^3 \nabla_x \cdot \left(\delta b^i \frac{f\tilde{f}}{f+\tilde{f}}\right).
\end{align*}
Therefore, one obtains \eqref{eq:ddt prop5.1}.

\end{document}